\newlength\savedwidth
\def\toprule{\noalign{\global\savedwidth\arrayrulewidth
\global\arrayrulewidth 1pt}%
\hline
\noalign{\global\arrayrulewidth\savedwidth}}
\def\midrule{\noalign{\global\savedwidth\arrayrulewidth
\global\arrayrulewidth .6pt}%
\hline
\noalign{\global\arrayrulewidth\savedwidth}}
\def\bottomrule{\noalign{\global\savedwidth\arrayrulewidth
\global\arrayrulewidth 1pt}%
\hline
\noalign{\global\arrayrulewidth\savedwidth}}
\newtheorem{theorem}{Theorem}
\newtheorem{proposition}{Proposition}
\newtheorem{lemma}{Lemma}
\theoremstyle{definition}
\theoremstyle{remark}
\newtheorem*{remark*}{Remark}
\DeclareMathOperator{\re}{Re}
\DeclareMathOperator{\im}{Im}
\newcommand{\testf}{{C^{\infty}_0(D)}}
\renewcommand{\theenumi}{\alph{enumi}}
\begin{document}
\title[Slit holomorphic flows and GFF]{Slit holomorphic stochastic flows and Gaussian free field}
\author[G. Ivanov]{Georgy Ivanov$^{\dag}$}
%\author[A.Tochin]{Alexey Tochin}
\author[N.-G. Kang]{Nam-Gyu Kang$^{\ddag}$}
\author[A. Vasil'ev]{Alexander Vasil'ev$^{\sharp}$}

\address{\newline {\it Georgy~Ivanov and Alexander Vasil'ev:}  \medskip \newline Department of Mathematics \newline University of Bergen \newline P.O.~Box 7803,
Bergen NO-5020 \newline Norway}
\email{georgy.ivanov@math.uib.no}
\email{alexander.vasiliev@math.uib.no}

\
\address{\newline {\it Nam-Gyu Kang:}  \medskip \newline Department of Mathematical Sciences  \newline Seoul National University  \newline San56-1 Shinrim-dong Kwanak-gu Seoul 151-747  \newline South Korea}
\email{nkang@snu.ac.kr}

\thanks{The authors$^{\dag,\sharp}$ have been  supported by EU FP7 IRSES program STREVCOMS, grant no. PIRSES-GA-2013-612669, by the grants of the Norwegian Research Council  \#213440/BG,  \#239033/F20\@. The author$^{\dag}$ has also been supported by Meltzerfondet. The author$^{\ddag}$ has also been supported by Samsung Science and Technology Foundation (SSTF-BA1401-01).}

%\thanks{}

\subjclass[2010]{30C35, 34M99, 60D05, 60J67}

\keywords{Slit holomorphic stochastic flows, SLE, Gaussian free field}

%\date{\textcolor{red}{04/04 2014} }

\begin{abstract}
It was realized recently that the chordal, radial and dipolar SLEs are special cases of a general slit holomorphic stochastic flow. We characterize those slit holomorphic stochastic flows which generate level lines of the Gaussian free field. In particular, we describe the modifications of the Gaussian free field (GFF) corresponding to the chordal and dipolar SLE with drifts. Finally, we develop a version of conformal field theory based on the background charge and Dirichlet boundary condition modifications of GFF  and present martingale-observables for these types of SLEs.
\end{abstract}

\maketitle

\section{Introduction}

Close connections between the Schramm-L\"owner evolution (SLE) and the Gaussian free field (GFF) were first discovered in~\cite{SSexplorer,sheffieldcontour1}. Works in which the connections were further investigated include~\cite{dub09, izyurov,sheffieldcontour2,sheffield2010zipper}. Naturally, these connections between SLE and CFT led to a new interpretation of SLE in terms of conformal field theory (CFT), which was given in \cite{Kang2012a,MK,Kang2013,Kang2013k}.

In the case of the  chordal SLE(4), the relation with GFF is relatively easy to explain. Informally speaking, the random SLE(4) curve can be viewed as a `level line' (or a `discontinuity line') of the field
\[
 \tilde{\Phi}(z)=\Phi(z) + u(z),\quad z \in \mathbb{H}.
\]
Here $\Phi$ denotes the Gaussian free field in the upper half-plane $\mathbb{H}$ with the zero Dirichlet boundary conditions, and $u$ is a harmonic function given in this case by $u(z) = \sqrt{2}\, \arg z$.

A key ingredient in the proof of this result is the formula
\begin{equation}
\label{eq:hadamardIntro}
 dG_{\mathbb{H}}(w_t(z_1),w_t(z_2)) = -\frac12 d \langle u(w_t(z_1)), u(w_t(z_2)) \rangle. 
\end{equation}
where $\{w_t\}_{t\geq 0}$ is the stochastic flow of the chordal SLE(4), and $G_{\mathbb{H}}(z_1,z_2)$ is Green's function in $\mathbb{H}$. The formula \eqref{eq:hadamardIntro} is sometimes referred to as Hadamard's variational formula.

In~\cite{ivanov2014general}, chordal, radial and dipolar SLE were realized as particular cases of the \emph{slit holomorphic stochastic flows}. A normalized slit holomorphic stochastic flow is the solution to the Stratonovich stochastic differential equation
\begin{equation}
\label{eq:shsf}
 \begin{cases}
  d w_t(z) = -b(w_t(z)) \, dt + \sqrt{\kappa}\,\sigma(w_t(z))\circ dB_t, \\
  w_0(z) = z,  
 \end{cases}z\in\mathbb{H},
\end{equation}
where $b$ is a semicomplete vector field of the form
\begin{equation}
\label{eq:bformula}
b(z) = -\frac{2}{z} -b_{-1}  - b_0 z - b_1 z^2, \quad b_{-1},\, b_0, \,b_1\in \mathbb{R}, 
\end{equation}
and $\sigma$ is a complete vector field
 \begin{equation}
\label{eq:sigmaformula}
\sigma(z) = -1 - \sigma_0 z - \sigma_1 z^2,\quad \sigma_0, \, \sigma_1 \in \mathbb{R}.
\end{equation}

In this paper we address the following problem: traces of which slit holomorphic stochastic flows can be regarded as `level lines' of GFF modifications? The formula~\eqref{eq:hadamardIntro} plays an important role in answering this question.

In \cite{sheffieldcontour1} and \cite{sheffieldcontour2} it was essential that the function $u$ is bounded. We relax this condition, which allows us to consider chordal SLE curves with drift as level lines of a GFF modification with an unbounded $u$. In particular, we show that the field
\[
 \tilde{\Phi}(z) =\Phi(z) + \alpha \sqrt{\frac{2}{\kappa}} \, \im z + \sqrt{\frac{8}{\kappa}}\, \arg z
\]
corresponds to the chordal SLE($\kappa$) driven by $\sqrt{\kappa} B_t+\alpha\, t$.

It may happen that one modification of GFF corresponds to several different slit holomorphic stochastic flows. We verify that the families of curves generated by such flows are indeed closely related to each other.

\medskip

\noindent\textbf{Acknowledgments.} The authors would like to thank Alexey Tochin for the discussions on the subject of this paper.

\section{Definitions}
\subsection{Slit holomorphic stochastic flows}
Let  the vector fields $b$ and $\sigma$ be as in~\eqref{eq:bformula} and \eqref{eq:sigmaformula}, respectively,  and let $\{w_t\}_{t\geq 0}$ denote the flow of~\eqref{eq:shsf}.
% \begin{equation}
% \label{eq:bformula}
% b(z) = -\frac{2}{z} -b_{-1}  - b_0 z - b_1 z^2, \quad b_{-1},\, b_0, \,b_1\in \mathbb{R}, 
% \end{equation}
% and let 
% \begin{equation}
% \label{eq:sigmaformula}
%  \sigma(z) = -1 - \sigma_0 - \sigma_1 z^2,\quad \sigma_0, \, \sigma_1 \in \mathbb{R}.
% \end{equation}
% The stochastic flow $\{w_t\}_{t\geq 0}$ in the upper half-plane $\mathbb{H} =\{z : \im z > 0\}$ corresponding to the Stratonovich initial value problem
% \begin{equation}
% \label{eq:shsf}
%  \begin{cases}
%   d w_t(z) = -b(w_t(z)) \, dt + \sqrt{\kappa}\,\sigma(w_t(z))\circ dB_t, \\
%   w_0(z) = z,  
%  \end{cases}z\in\mathbb{H},
% \end{equation}
The flow $\{w_t\}_{t\geq 0}$ is called the \emph{slit holomorphic stochastic flow} driven by $b$  and $\sigma$.

Some of the properties of slit holomorphic stochastic flows are listed below (see \cite{ivanov2014general}):

\begin{enumerate}
 \item For $t\geq 0$, consider the random set
\[
 H_t = \{z \in \mathbb{H} : w_s(z) \textrm{ is defined for }  s\in [0,t]\}. 
\]
Then, for each $t\geq 0$, $H_t$ is a simply connected domain, and $w_t$ is a conformal isomorphism $w_t:H_t \to \mathbb{H}$. This $H_t$ is called the \emph{evolution domain} of the flow $\{w_t\}_{t\geq 0}$ at the time $t$.

\item Let $K_t = \mathbb{H}\setminus H_t$ for $t\geq 0$ ($K_t$ is the \emph{hull} of the flow \eqref{eq:shsf} at time $t$). Then the law of $K_t$ is locally absolutely continuous with respect to the law of SLE$(\kappa)$ hulls (see \cite[Theorem~9]{ivanov2014general} for the precise statement). 

\smallskip 
\noindent The remaining properties are consequences of the absolute continuity of laws. 
\item The hull $K_t$ is almost surely generated by a curve.
\item Let $\gamma_t$ be the curve generating $K_t$. Then, with probability 1,
\begin{itemize}
\smallskip\item if $0 \leq \kappa \leq 4,$ $\gamma$ is a simple curve,
\smallskip\item if $4 < \kappa < 8,$ $\gamma$ has self-intersections,
\smallskip\item if $\kappa \geq 8$, $\gamma$ is space-filling.
\end{itemize}
\end{enumerate}

The chordal SLE can be obtained as  a slit holomorphic stochastic flow with 
\[
  b_{-1} = b_0 = b_1 = \sigma_0 = \sigma_1 = 0.   
\]
After a conjugation with an appropriate conformal map, radial and dipolar SLE can also be treated as special cases of \eqref{eq:shsf}:  

%\bigskip
\begin{table}[ht]  \renewcommand{\arraystretch}{2.5}
\begin{center}
\begin{tabular}{c|c|c}
%\label{Table: Standard SLEs}
\toprule
\hspace{1em} SLE type \hspace{1em}  &  \hspace{2em}  $b$  \hspace{2em}  &  \hspace{2em}  $\sigma$  \hspace{2em}  \\ 
\midrule 
%\hline 
Chordal  & $-\dfrac2z$ & $-1$ \\ 
%\hline 
Radial & $-\dfrac2z - \dfrac z2$ & $- 1 -  \dfrac14{z^2}$ \\ 
%\hline 
%Radial (normalized)& $2\,(\ell_{-2} + \ell_{0})$ & $\ell_{-1}+\ell_{1}$ \\ 
%\hline 
\phantom{$\dfrac1{\frac1{1_1}}$} Dipolar \phantom{$\dfrac1{\frac1{1_1}}$} & $ -\dfrac2z +  \dfrac z2$ & $ - 1 + \dfrac14{z^2}$\\ 
%\hline 
%Dipolar (normalized) & $2\,(\ell_{-2} - \ell_{0})$ & $ \ell_{-1}-\ell_{1}$ \\ 
%\hline
\bottomrule 
\end{tabular} 
\end{center}
\end{table}
%\bigskip

% Such close relations between general slit holomorphic stochastic flows and classical $SLEs$ suggest an alternative terminology for the flow  of \eqref{eq:shsf}, namely, $(b,\sigma)$-$SLE_{\kappa}$.

\subsection{Gaussian free field}

Let $D$ be a simply connected hyperbolic domain in  $\mathbb{C}$, and let $\testf$ denote the space of \emph{test functions} in $D$. Recall that a \emph{(Schwartz) distribution} on $D$ is a continuous linear functional on $\testf$ (see \cite{bookAdams} for details).
Let $A$ denote the area measure in $D$. For $\mathfrak{p}, \mathfrak{q}\in C^{\infty}_0(D)$, and let  $(\mathfrak{p}, \mathfrak{q})_{\mathcal{E}(D)}$ denote the inner product, 
\[
(\mathfrak{p}, \mathfrak{q})_{\mathcal{E}(D)}:= \int_{D\times D}2\, G_{D}(z_1, z_2)\, \mathfrak{p}(z_1)\, \mathfrak{q}(z_2)\, dA(z_1)\, dA(z_2),
\]
where $G_D$ denotes Green's function in $D$. 
The \emph{electrostatic potential energy} of $\mathfrak{p}$ in $D$ is 
\[
 E(\mathfrak{p}) = \|\mathfrak{p}\|_{\mathcal{E}(D)} = \sqrt{(\mathfrak{p}, \mathfrak{p})_{\mathcal{E}(D)}}.
\]

An instance of the \emph{Gaussian free field} (GFF) on $D$ is a random Schwartz distribution $\Phi_D$ on $D$, such that for a test function $\mathfrak{p}\in C^{\infty}_0(D)$, the image $(\Phi_D, \mathfrak{p})$ is a Gaussian random variable with the zero mean and variance $\|\mathfrak{p}\|^2_{\mathcal{E}(D)}$. 
The law of GFF is the only probability law on the space of distributions such that for a test function $\mathfrak{p}\in \testf$ the random  variable $(\Phi_D, \mathbb{\mathfrak{p}})$ is Gaussian with
$ \mathsf{E} (\Phi_D, \mathbb{\mathfrak{p}}) = 0,$ and  $ \mathsf{E} (\Phi_D, \mathbb{\mathfrak{p}})^2 = \|\mathfrak{p}\|_{\mathcal{E}(D)}^2$.

\subsubsection{Extension of GFF}
Let $D_0 $ be a subdomain on $D$, and let $\Phi_{D_0}  :C^{\infty}_0(D_0 ) \to L^2(\Omega, \mathbb{P})$ be an instance of GFF on $D_0 $ with zero boundary conditions. Then there exists a unique extension of $\Phi_{D_0}  $ to $C^{\infty}_0(D)$
\[
\tilde{\Phi}_{D_0}   :\testf \to L^2(\Omega, \mathbb{P}),
\]
 such that for every  $\mathfrak{p}\in \testf,$ the random variable $(\tilde{\Phi}_{D_0}  , \mathfrak{p})$ is zero mean Gaussian and
\[
 \mathsf{E} (\Phi_{D_0}  , \mathfrak{p})^2 = \|\mathfrak{p}\|^2_{\mathcal{E}(D_0 )}.
\]
Therefore, we can treat a Gaussian free field on a subdomain of $D$ as a distribution on the whole of $D$. \todo{careful: avoid confusion between laws and instances}

\subsubsection{Modifications of GFF} \todo{The term ``modification'' has here a different meaning, compared to works of Kang and Makarov} Let $u$ be a harmonic function in $D$. In particular, $u$ is continuous and $u\in L^1_{loc}(D,A)$. We can define the GFF \emph{with the mean $u$}  as the sum $\tilde{\Phi}_D = \Phi_D + u$, so that it acts on  test functions $\mathfrak{p}\in C^{\infty}_0(D)$ according to the rule
\[
 (\tilde{\Phi}_D, \mathfrak{p}) = (\Phi_D, \mathfrak{p}) + (u, \mathfrak{p}).
\]

If $u$ is bounded, then by Fatou's theorem it is the Poisson integral of a bounded measurable function $u_{\partial}$ on $\partial D$. In this case we can refer to $\tilde{\Phi}_D$ as the Gaussian free field with Dirichlet boundary conditions $u_{\partial}$. 

If $u$ is only bounded from above or from below, then the Herglotz representation theorem implies that it can be represented as the Poisson integral of a signed measure $\mu_u$ on $\partial D$.

Now, suppose $D_0 $ is a simply connected subdomain of $D,$ and the Lebesgue measure of $D\setminus D_0 $ is zero. Let $\tilde{\Phi}_{D_0}   = \Phi_{D_0}   + u$ be a modification of GFF on $D_0 $, and let $\mathfrak{p}\in C^{\infty}_0(D)$. Even though $u$ is only defined  on $D_0 $, the integral 
\[
 (u, \mathfrak{p}) = \int_{D} u(z)\, \mathfrak{p}(z) \, dA(z)
\]
is still well-defined, hence $\tilde{\Phi}_{D_0 }$ can be treated as a random distribution on the whole of $D$.

\subsubsection{Pullbacks of distributions and conformal invariance of {\rm GFF}} Let $\phi:D_1\to D_2$ be a conformal isomorphism, and let $\Psi$ be a distribution on $D_2$. The \emph{pullback}  $\Psi\circ \phi$ of $\Psi$ with respect to $\phi$ is the distribution on $D_1$ defined by the formula
\[
 (\Psi \circ \phi, \mathfrak{p}) = (\Psi, |(\phi^{-1})'|^2 \, \mathfrak{p}(\phi^{-1})), \quad \mathfrak{p}\in C^\infty_0(D_1).
\]
If $\Psi$ is an $L^1_{loc}(D_2)$ function, then this definition corresponds to the change of variables in the formula
\[
 \int_{D_1} \Psi(\phi (z))\, \mathfrak{p}(z) \,dA (z) = \int_{D_2} \Psi(w) \, |(\phi^{-1}(w))'|^2\, \mathfrak{p}(\phi^{-1}(w))\, dA(w).
\]
The Gaussian free field is \emph{conformally invariant}, meaning that $\Phi_{D_2} \circ \phi$ has the same law as an instance of GFF in $D_1$.
 
\subsection{Lie derivatives }
Let $v$ be a holomorphic vector field defined in a simply connected domain $D \subsetneq\mathbb{C}$ with a local flow $\psi_t.$ 
Suppose $f$ is a non-random conformal field, i.e., an assignment of $(f\,\|\,\phi):\phi(U)\to \mathbb{C}$ to each local chart $\phi:U\to\phi(U).$
The Lie derivative $\mathcal{L}_v f$ of $f$ is defined by 
$$\mathcal{L}_v f = \frac{d}{dt}\Big|_{t=0} f_t, \qquad 
(f_t\,\|\,\mathrm{id})(z) = (f\,\|\,\psi_{-t}),$$
where $\mathrm{id}$ is the identity chart of $D.$ 
If $f = f_D$ is a non-random smooth $(\lambda,\lambda_*)$-differential in $D,$ then
$$f_D = (h')^\lambda (\overline{h'})^{\lambda_*} f_{\widetilde D}\circ h,$$
where $h: D\to \widetilde D$ is a conformal transformation from $D$ onto another simply connected domain $\widetilde D \subsetneq\mathbb{C}.$ 
In this case, we have
$$f_t(z) = (\psi_t'(z))^\lambda  \overline{(\psi_t'(z))}^{\lambda_*} f(\psi_t(z)).$$
Thus Lie derivative $\mathcal{L}_v f$ of a $(\lambda,\lambda_*)$-differential $f$ is a differential:
$$\mathcal{L}_{v} f = \left(v \partial + \bar{v} \bar{\partial} + \lambda v' + \lambda_* \overline{v'}\right) f.$$
The formulas above can be generalized to $n$ complex variables, simply by extending the definition of $\mathcal{L}_{v}.$ For example, if $f = f(z_1,\ldots, z_n)$ is a $(\lambda_j,\lambda_{*j})$-differential at $z_j$ then
$$\mathcal{L}_{v} f (z_1, \ldots, z_n) = \left(\sum^n_{k=1} v(z_k)\, \frac{\partial}{\partial z_k} + \sum^n_{k=1} \overline{v(z_k)}\, \frac{\bar\partial}{\bar\partial z_k} + \lambda_jv'(z_j) + \lambda_{*j}\overline{v'(z_j)}\right) f(z_1, \ldots, z_n).$$

If $f$ is a pre-pre-Schwarzian form of order $\mu$, i.e., $f_D = f_{\widetilde D}\circ h + \mu \log h',$ then 
$$f_t(z) =  f(\psi_t(z)) + \mu \log \psi_t'(z),$$
and $\mathcal{L}_v f$ becomes
$$\mathcal{L}_{v} f = v \partial  f + \mu v'.$$

Let $D$ be a simply connected domain in $\mathbb{C}$ and consider the stochastic flow corresponding to 
\begin{equation}
 \label{eq:shsf2}
\begin{cases}
 d w_t(z) = -b(w_t(z)) \, dt + \sqrt{\kappa}\, \sigma(w_t(z)) \circ dB_t,\\
 w_0(z) = z,
\end{cases}z\in D, \kappa>0,
\end{equation}
with holomorphic coefficients $b, \,\sigma :D \to \mathbb{C}$. Let $f $ be a (smooth) non-random conformal field in $D.$ We can write It\^o's formula in terms of the Lie derivatives as
\begin{align}
\label{eq:itolie}
 df(w_t(z)) 
= \left(-\mathcal{L}_b +\frac{\kappa}{2} \mathcal{L}^2_{\sigma}\right) f (w_t(z)) 
+ \sqrt{\kappa}  \, \mathcal{L}_{\sigma} f (w_t(z)) \,dB_t,
\end{align}
so that the infinitesimal generator of the diffusion \eqref{eq:shsf2} is expressed as
\begin{equation}
\label{eq:lieinfgen}
\mathcal{A} = - \mathcal{L}_{b} + \frac{\kappa}{2} \mathcal{L}^2_{\sigma}, 
\end{equation}
and the quadratic covariation is given by
\begin{equation}
\label{eq:liequadcov}
 d \langle f(w_t(z_1)), \, f(w_t(z_2))\rangle = \kappa \, \mathcal{L}_{\sigma} f(w_t(z_1))\,  \mathcal{L}_{\sigma} f(w_t(z_2)) \, dt, \quad {z_1, \, z_2\in D}.
\end{equation}

Let $\ell_{n} = - z^{n+1},$ $n=-2,\ldots 1,$ and let $G_{\mathbb{H}}(z_1,z_2)$ be Green's function in $\mathbb{H}$,
\[
 G_{\mathbb{H}}(z_1,z_2) = \log\left|\frac{z_1 - \overline{z_2}}{z_1 - z_2}\right|.
\]
Then,
\begin{align*}
 \mathcal{L}_{\ell_n} G_{\mathbb{H}}(z_1,z_2) 
=\re\left[\frac{z^{n+1}_1 -z^{n+1}_2}{z_1-z_2} - \frac{z^{n+1}_1 -\overline{z^{n+1}_2}}{z_1 - \overline{z_2}}\right].
\end{align*}
In particular, if $b$ and $\sigma$ have the form  \eqref{eq:bformula} and \eqref{eq:sigmaformula} respectively, then
\begin{equation}
\label{eq:liehadamard}
 \mathcal{L}_b G_{\mathbb{H}}(z_1,z_2) = 4 \im \frac{1}{z_1} \, \im \frac{1}{z_2},
\end{equation}
and
\begin{equation}
\label{eq:lsigma}
 \mathcal{L}_{\sigma} G_{\mathbb{H}}(z_1,z_2) = 0.
\end{equation}
In fact, the vector field $\sigma$ generates a flow of M\"obius automorphisms of $\mathbb{H}$, and therefore, \eqref{eq:lsigma} is a consequence of the invariance of $G_{\mathbb{H}}(z_1,z_2)$ with respect to such automorphisms.
Equation \eqref{eq:liehadamard} is a special case of Hadamard's formula, see \cite{neharibook}.

\section{Coupling of slit holomorphic stochastic flows and GFF}

\subsection{Coupling of GFF and chordal SLE}
Let $X$ and $Y$ be two random variables, which can in principle be defined on two different probability spaces. A coupling of $X$ and $Y$ is a new probability space together with random variables $X'$ and $Y'$, such that $X'$ has the same law as $X$, and $Y'$ has the same law as $Y$. This definition can be easily extended to more general random objects. In particular, we study couplings of stochastic processes (namely, slit holomorphic stochastic flows) with  random distributions (modifications of GFF).

Let $\tilde{\Phi}_{\mathbb{H}}$ be a modification of the GFF in $\mathbb{H}$, and let $\{w_t\}_{t\geq 0}$ be a slit holomorphic stochastic flow in $\mathbb{H}$ with $\kappa \in (0,4]$. \todo{it will only work for $\kappa =4$.}
%, and let $H_t$ be the evolution domain at the time $t\geq 0$, i.e. $H_t = w^{-1}_t(\mathbb{H})$.  
Note that the Lebesgue measure of $\mathbb{H}\setminus w_t^{-1}(\mathbb{H})$ is almost surely zero for all $t\geq0$.

Let $\mathcal{F}_{T}$ denote the $\sigma$-algebra $\sigma\left\{w_s: s\in [0,T]\right\}$. We are interested in such couplings of $\{w_t\}_{t\geq 0}$ and $\tilde{\Phi}_{\mathbb{H}}$ that have the following property 
\begin{equation}
 \label{eq:couplingcond}
\mathsf{Law}\left(\tilde{\Phi}_{\mathbb{H}} \, \middle| \, \mathcal{F}_{T}\right) =  \mathsf{Law}(\tilde{\Phi}_{\mathbb{H}} \circ w_T), \quad \textrm{ for all } T > 0.
\end{equation}
Thus, for $\mathfrak{p}_1, \mathfrak{p}_{2}\in C^{\infty}_0(\mathbb{H})$,
\[
 \mathsf{E}((\tilde{\Phi}_{\mathbb{H}},\mathfrak{p}_1)|\mathcal{F}_T ) = \int_{H_T} u(w_T(z))\, \mathfrak{p}_1(z)\, dA(z),
\]
and
\begin{align*}
 \mathsf{Cov} \left((\tilde{\Phi}_{\mathbb{H}}, \mathfrak{p}_1), \,(\tilde{\Phi}_{\mathbb{H}}, \mathfrak{p}_2)\,\middle| \mathcal{F}_T\,\right) = \int_{H_T \times H_T} 2 \,G_{H_T} (z_1, z_2)\, \mathfrak{p}(z_1)\, \mathfrak{p}_2(z_2)\, dA(z_1)\, dA(z_2).
\end{align*}

This property implies, in particular, that the curves generated by $\{w_t\}_{t\geq0}$ can be viewed as  `level lines' of $\tilde{\Phi}_{\mathbb{H}}$, see \cite{sheffieldcontour1}, \cite{sheffieldcontour2}. \todo{Add $\Psi$, similar to Kang, Makarov}
For example, chordal SLE  can be coupled with the GFF as follows, see \cite{ dub09,sheffieldcontour1, sheffieldcontour2}.

Let $\{w_t\}_{t\geq 0}$ stand for the stochastic flow  of the chordal SLE$(\kappa)$
\[
\begin{cases}
 dw_t(z) = \dfrac{2}{w_t(z)}\, dt - \sqrt{\kappa}\, dB_t,\\
w_0(z) = z,
\end{cases}z\in \mathbb{H}.
\]
Let $\Phi_{H_t}$ be a GFF with zero boundary conditions on $H_t$, independent of $B_t$,
and let 
\[
u_t(z) = \sqrt{\frac{8}{\kappa}}\arg w_t(z) + \left(\sqrt{\frac{8}{\kappa}} - \sqrt{\frac{\kappa}{2}}\right)\, \arg w'_t(z).
\]
Let
\[
 \tilde{\Phi}_{H_t}= \Phi_{H_t} + u_t.
\]
Then for every real constant $T\geq 0$, given $\mathcal{F}_T$, the laws of the random distributions $\tilde{\Phi}_{\mathbb{H}}(\omega)$  and those of  $\tilde{\Phi}_{H_T (\omega)}(\omega)$ coincide, hence \eqref{eq:couplingcond} holds.
Couplings of radial, chordal, dipolar and annulus SLE with various modifications of the GFF were studied in \cite{izyurov}.

\subsection{Hadamard's formula and coupling}

The following proposition states that a key ingredient for the existence of a coupling of a slit holomorphic stochastic flow and a modification of GFF $\tilde{\Phi}_{H_t} = \Phi_{H_t} + u_t$ that satisfies \eqref{eq:couplingcond} is the relation
\[
 dG_{H_t}(z_1,z_2) = -\frac12 d \langle (u_t(z_1), u_t(z_2)) \rangle.
\]
This equation is sometimes also called Hadamard's formula because it describes variation of Green's function, similarly to the formula derived in \cite{hadamard1908memoire}.

The proof of the proposition is mostly an adaptation from \cite{sheffieldcontour1, sheffieldcontour2}.

\begin{proposition}
\label{th:thcoupling}
Let $\{w_t\}_{t\geq0}$ be a normalized slit holomorphic stochastic flow in $\mathbb{H}$ 
\begin{equation}
\begin{cases}
 dw_t(z) = -b(w_t(z)) \, dt + \sqrt{\kappa}\, \sigma(w_t(z))\circ dB_t,\\
w_0(z) = z, \quad z\in \mathbb{H}
\end{cases}
\end{equation}
with $\kappa \in(0,4]$. 
Suppose $u$ is a positive harmonic function in $\mathbb{H}$ defining an imaginary part of a pre-pre-Schwarzian form of order $-2\mathfrak{b}$ by the rule 
\[
 u_t(z) = u(w_t(z)) -2\mathfrak{b} \arg w'_t(z)
\]
such that 
\[
  \left(- \mathcal{L}_{b} + \frac{\kappa}{2} \mathcal{L}^2_{\sigma} \right) u  (z) = 0, \quad \textrm{for all } z\in \mathbb{H},
\]
and suppose $|u_t|$ is bounded by a continuous function not depending on $t$.

Let $\Phi_{\mathbb{H}}$ be a Gaussian free field independent of the process $B_t$.  Denote $H_t := w^{-1}_t(\mathbb{H})$, $\tilde{\Phi}_{\mathbb{H}} := \Phi_{\mathbb{H}} + u$, $\tilde{\Phi}_{H_t} := \tilde{\Phi}_{\mathbb{H}} \circ w_t = \Phi_{H_t} + u_t$, $t\geq 0$.

 Suppose that
\begin{equation}
\label{eq:hadassumption}
 dG_{H_t}(z_1,z_2) = -\frac12 d \langle u_t(z_1), u_t(z_2) \rangle
\end{equation}
holds for all $t\geq 0$ with probability 1. Then for every $T\geq 0$, given $\mathcal{F}_T$, the random distributions $\tilde{\Phi}_{\mathbb{H}}(\omega)$  and $\tilde{\Phi}_{H_T(\omega)}(\omega)$ have the same law, hence $\tilde{\Phi}_{H_T(\omega)}(\omega)$ gives a coupling of GFF with $\{w_t\}_{t\geq 0}$ that satisfies \eqref{eq:couplingcond}.

\end{proposition}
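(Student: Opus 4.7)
The plan is to verify the coupling identity~\eqref{eq:couplingcond} by checking that, for every $\mathfrak{p}\in C^{\infty}_0(\mathbb{H})$, the conditional characteristic function $\mathsf{E}\bigl(\exp(i(\tilde{\Phi}_{\mathbb{H}},\mathfrak{p}))\,\big|\,\mathcal{F}_T\bigr)$ equals the Gaussian characteristic function with mean $(u_T,\mathfrak{p})$ and variance $\int\!\!\int 2G_{H_T}(z_1,z_2)\mathfrak{p}(z_1)\mathfrak{p}(z_2)\,dA(z_1)\,dA(z_2)$; these are precisely the conditional mean and variance of $(\tilde{\Phi}_{H_T},\mathfrak{p})$, so matching them (and polarising in $\mathfrak{p}$ to recover all covariances) forces the two conditional laws of distributions to coincide on every test function, from which~\eqref{eq:couplingcond} follows by a standard monotone-class extension.

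\emph{Mean.} First I would apply the Lie-derivative It\^o formula~\eqref{eq:itolie} to the pre-pre-Schwarzian form $u$ of order $-2\mathfrak{b}$, whose evolution along the flow is $u_t(z)=u(w_t(z))-2\mathfrak{b}\arg w'_t(z)$. Its drift equals $(-\mathcal{L}_b+\frac{\kappa}{2}\mathcal{L}^2_\sigma)u$, which vanishes by hypothesis, so $u_t(z)$ is a local martingale in $t$ for every fixed $z$. The uniform bound on $|u_t|$ combined with Fubini will then upgrade $t\mapsto(u_t,\mathfrak{p})$ to a true martingale on $[0,T]$ with terminal value $(u_T,\mathfrak{p})$, which is the required conditional mean.

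\emph{Covariance and assembly.} The Hadamard hypothesis~\eqref{eq:hadassumption} rephrases as the statement that $2G_{H_t}(z_1,z_2)+\langle u_\cdot(z_1),u_\cdot(z_2)\rangle_t$ is constant in $t$ almost surely; a double Fubini against $\mathfrak{p}_1(z_1)\mathfrak{p}_2(z_2)$ yields the desired conditional covariance. To assemble these two pieces into the full characteristic function, I would form the complex process
\[
M_t:=\exp\!\Bigl(i(u_t,\mathfrak{p})-\int\!\!\int G_{H_t}(z_1,z_2)\mathfrak{p}(z_1)\mathfrak{p}(z_2)\,dA(z_1)\,dA(z_2)\Bigr)
\]
and verify via It\^o that the correction term arising from the squared martingale $i(u_t,\mathfrak{p})$ is cancelled exactly by the $dG_{H_t}$-increment; thus $M_t$ is a local martingale, and boundedness of $|u_t|$ together with standard bounds on $G_{H_t}$ makes it a true martingale on $[0,T]$. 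The identity $\mathsf{E}(M_T)=M_0$ then supplies the matching of conditional characteristic functions and completes the proof.

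\emph{Main obstacle.} The most delicate point will be the Fubini/dominated-convergence bookkeeping required to transfer the pointwise (in $z$) stochastic identities through the spatial pairings against $\mathfrak{p}$, together with the It\^o-calculus treatment of the pre-pre-Schwarzian correction $-2\mathfrak{b}\arg w'_t(z)$ via the Lie-derivative formula $\mathcal{L}_v f=v\,\partial f+\mu v'$ recorded in the preamble. The uniform bound on $|u_t|$ is precisely the ingredient that lets us promote the local martingales in the mean step and in the exponential-martingale step to true martingales; without it (as is relevant for truly unbounded $u$ such as $\arg z$) a more delicate truncation or localisation argument would be needed, which is why the restriction $\kappa\in(0,4]$ and the boundedness assumption appear together in the hypothesis.
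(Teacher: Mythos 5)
Your proposal is essentially correct, and its first two steps coincide with the paper's: the martingale property of $u_t(z)$ from the annihilation condition $\left(-\mathcal{L}_b+\frac{\kappa}{2}\mathcal{L}^2_\sigma\right)u=0$, upgraded to a true martingale by the uniform bound and pushed through the spatial pairing by Fubini (the paper packages this as a ``weighted averages of martingales'' lemma), and the identification $\langle(u_\cdot,\mathfrak{p})\rangle_T=E_0-E_T$ with $E_t=\|\mathfrak{p}\|^2_{\mathcal{E}(H_t)}$ via the Hadamard hypothesis (the paper does this by showing $u_t(z_1)u_t(z_2)+2G_{H_t}(z_1,z_2)$ is a bounded martingale and invoking uniqueness of the compensator --- a cleaner justification of your ``double Fubini'' than a direct interchange of quadratic variation and spatial integration, and worth adopting). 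Where you genuinely diverge is the final assembly: the paper time-changes $(u_{\tau_t},\mathfrak{p})$ into a Brownian motion via L\'evy's characterization, deduces $(u_T,\mathfrak{p})\sim\mathcal{N}((u,\mathfrak{p}),E_0-E_T)$ given the relevant $\sigma$-algebra, and adds the conditionally independent Gaussian $(\Phi_{H_T},\mathfrak{p})\sim\mathcal{N}(0,E_T)$; you instead run the complex exponential martingale $M_t=\exp\bigl(i(u_t,\mathfrak{p})-\tfrac12 E_t\bigr)$ (note $|M_t|\le 1$ automatically since $(u_t,\mathfrak{p})$ is real and $E_t\ge 0$, so no extra bounds on $G_{H_t}$ are needed) and read off the matching of characteristic functions from $\mathsf{E}\,M_T=M_0$. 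Both routes are standard and both work; yours is more direct and avoids the time change, while the paper's makes the Gaussianity of $(u_T,\mathfrak{p})$ and the conditional-independence structure explicit. One caveat on your framing: the identity ``$\mathsf{E}\bigl(\exp(i(\tilde{\Phi}_{\mathbb{H}},\mathfrak{p}))\,\big|\,\mathcal{F}_T\bigr)$ equals the Gaussian characteristic function with mean $(u_T,\mathfrak{p})$'' cannot be verified literally on the product space of the hypotheses, where $\Phi_{\mathbb{H}}$ is independent of $B$ and the left-hand side is deterministic while the right-hand side is random; what your computation actually (and correctly) proves is that the recoupled field $\tilde{\Phi}_{H_T}=\Phi_{H_T}+u_T$ has the unconditional law of $\tilde{\Phi}_{\mathbb{H}}$, which together with its manifest conditional law given $\mathcal{F}_T$ is exactly the content of the proposition.
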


The following two auxiliary results are used in the proof of the proposition.

\begin{lemma}[{Conditional Fubini theorem, see e.g. \cite[{Theorem 1.1.8}]{Applebaum}}]
Let $(\Omega, \mathcal{F}, \mathbb{P})$ be a probability space, and let $(S, \Sigma, \mu)$ be a $\sigma$-finite measure space. If
\[
F\in L^1(S\times \Omega, \Sigma \otimes \mathcal{F}, \mu \times \mathbb{P}),
 \]
then
\[
\mathsf{E}\left(\left|\int_S \mathsf{E} \left[F(s,\omega)\,|\,\mathcal{G}\right]\, d\mu(s)\right|\right) < \infty,
\]
and
\[
\mathsf{E}\left[\int_S  F(s,\cdot)\, d\mu(s)\,|\, \mathcal{G}\right] = \int_S \mathsf{E} \left[F(s,\cdot)\,|\,\mathcal{G}\right] d\mu(s) \quad \textrm{a.s.}
\]

\end{lemma}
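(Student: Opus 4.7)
The plan is to follow the classical measure-theoretic induction (``standard machine''): verify the identity first on indicators of finite-measure rectangles, extend by linearity to simple functions, pass to nonnegative integrable $F$ via monotone convergence, and finally split $F = F^{+} - F^{-}$ for general $F \in L^{1}(\mu\times\mathbb{P})$. This is structurally parallel to the usual proof of the unconditional Fubini theorem, but with conditional expectations threaded through.

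I would dispose of the integrability statement first, by applying the unconditional Fubini theorem to $|F|$. Since $|F|\in L^{1}(\mu\times\mathbb{P})$, the function $s\mapsto\mathsf{E}|F(s,\cdot)|$ lies in $L^{1}(S,\mu)$. Using the contractive property of conditional expectation,
\[
\mathsf{E}\,\bigl|\mathsf{E}[F(s,\cdot)\mid\mathcal{G}]\bigr|\le\mathsf{E}\,\mathsf{E}\bigl[|F(s,\cdot)|\,\big|\,\mathcal{G}\bigr]=\mathsf{E}|F(s,\cdot)|,
\]
and integrating in $s$ then applying the triangle inequality yields
\[
\mathsf{E}\!\left(\left|\int_{S}\mathsf{E}[F(s,\cdot)\mid\mathcal{G}]\,d\mu(s)\right|\right)\le\int_{S}\mathsf{E}|F(s,\cdot)|\,d\mu(s)<\infty,
\]
so both sides of the asserted identity are integrable $\mathcal{G}$-measurable random variables.

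For the identity, start with $F(s,\omega)=\mathbf{1}_{A}(s)\,\mathbf{1}_{B}(\omega)$ with $A\in\Sigma$, $\mu(A)<\infty$, and $B\in\mathcal{F}$. Then $\int_{S}F\,d\mu=\mu(A)\mathbf{1}_{B}$ so the left-hand side is $\mu(A)\mathsf{E}[\mathbf{1}_{B}\mid\mathcal{G}]$, while $\mathsf{E}[F(s,\cdot)\mid\mathcal{G}](\omega)=\mathbf{1}_{A}(s)\mathsf{E}[\mathbf{1}_{B}\mid\mathcal{G}](\omega)$ and its $\mu$-integral in $s$ is again $\mu(A)\mathsf{E}[\mathbf{1}_{B}\mid\mathcal{G}]$. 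The class of such rectangles is a $\pi$-system generating $\Sigma\otimes\mathcal{F}$, where the hypothesis that $\mu$ is $\sigma$-finite is used. Linearity of ordinary and conditional expectation extends the identity to finite linear combinations, i.e.\ to simple functions of the form $\sum c_{i}\mathbf{1}_{A_{i}\times B_{i}}$ with $\mu(A_{i})<\infty$. For general nonnegative $F\in L^{1}(\mu\times\mathbb{P})$, approximate by an increasing sequence of such simple functions and apply the unconditional monotone convergence theorem to the $s$-integral and the conditional monotone convergence theorem to the $\mathcal{G}$-expectation. Splitting $F=F^{+}-F^{-}$ finishes the general case.

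The main technical nuisance — and the spot where care is required — is the choice of \emph{versions} of the conditional expectation. A priori $\mathsf{E}[F(s,\cdot)\mid\mathcal{G}]$ is only defined up to a $\mathbb{P}$-null set that could depend on $s$, which would obstruct the formation of the inner integral $\int_{S}\mathsf{E}[F(s,\cdot)\mid\mathcal{G}]\,d\mu(s)$. One resolves this by constructing a jointly $(\Sigma\otimes\mathcal{G})$-measurable representative along the induction: for rectangles it is the product of a $\Sigma$-measurable function in $s$ and a $\mathcal{G}$-measurable function in $\omega$ and so is manifestly jointly measurable; linearity and monotone pointwise limits preserve this joint measurability. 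Once such a version is in hand, the inner integral is a bona fide $\mathcal{G}$-measurable random variable and the identity and the a.s.\ statement are unambiguous.
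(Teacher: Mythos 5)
The paper gives no proof of this lemma at all: it is quoted as an auxiliary result with a citation to \cite{Applebaum}, so there is no internal argument to compare against, and your proposal supplies a proof where the paper delegates one. On its own merits your argument is essentially sound, and you correctly isolate the one genuinely delicate point, namely the construction of a jointly $(\Sigma\otimes\mathcal{G})$-measurable version of $\mathsf{E}[F(s,\cdot)\mid\mathcal{G}]$, without which the inner integral $\int_S \mathsf{E}[F(s,\cdot)\mid\mathcal{G}]\,d\mu(s)$ is not even well defined; the integrability bound via contractivity of conditional expectation plus Tonelli is also fine.

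Two remarks. First, one step is stated too loosely: a general nonnegative $F\in L^1(\mu\times\mathbb{P})$ cannot be approximated by an increasing sequence of simple functions built from \emph{rectangles} $A_i\times B_i$ — the canonical approximation produces indicators of arbitrary sets in $\Sigma\otimes\mathcal{F}$. So the $\pi$--$\lambda$ (Dynkin) step you only allude to must actually be executed: pass from rectangles to indicators $\mathbf{1}_E$ of arbitrary sets $E\in\Sigma\otimes\mathcal{F}$ contained in $A\times\Omega$ with $\mu(A)<\infty$ (complements by linearity, increasing unions by conditional monotone convergence), then exhaust $S$ using $\sigma$-finiteness, and only afterwards run the monotone-convergence passage to general nonnegative $F$. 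As written, the induction skips this and would not close. Second, there is a shorter route, which is essentially the one in the cited source: once a jointly measurable, integrable version of $\mathsf{E}[F(s,\cdot)\mid\mathcal{G}]$ is in hand, the identity follows by directly verifying the defining property of conditional expectation — for $G\in\mathcal{G}$,
\[
\mathsf{E}\Bigl[\mathbf{1}_G\int_S \mathsf{E}[F(s,\cdot)\mid\mathcal{G}]\,d\mu(s)\Bigr]
=\int_S \mathsf{E}\bigl[\mathbf{1}_G F(s,\cdot)\bigr]\,d\mu(s)
=\mathsf{E}\Bigl[\mathbf{1}_G\int_S F(s,\cdot)\,d\mu(s)\Bigr],
\]
using the unconditional Fubini theorem twice and, for each fixed $s$, the defining property of $\mathsf{E}[F(s,\cdot)\mid\mathcal{G}]$. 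With that observation the standard machine is needed only for the measurable-version construction, not for the identity itself, which streamlines your argument considerably.
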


\begin{lemma}[weighted averages of martingales]
\label{lem:weightedavg}
Let for each $z\in D$, $u_t(z)$ be a continuous martingale with respect to the filtration $\{\mathcal{F}_t\}_{t\geq 0}$, and let for each $t\geq 0$, $\mathsf{E} |u_t(z)| \in L^{1}_{loc}(D)$ as a function of $z$.  Then, for every $\mathfrak{p}\in C^{\infty}_0(D)$, the weighted average
\[
 M_t(\omega) := \int_D u_t(z) \,\mathfrak{p}(z) \,dA(z)
\]
is a martingale.

For a given $\omega_0\in \Omega,$ $t\mapsto M_t(\omega_0)$ is continuous at $t_0$ if there exists a function $U\in L^1_{loc}(D)$ such that
\[
 |u_t(z,\omega_0)| \leq U(z)\quad \textrm{ for all } t\in(t_0-\epsilon,t_0+\epsilon),
\]
or, alternatively, that $u_t(z)$ is continuous at $t_0$ in the local uniform topology.
\end{lemma}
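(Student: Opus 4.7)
The plan is to split the lemma into two independent claims (the martingale property and the pathwise continuity) and handle them separately. For the martingale property I will appeal directly to the conditional Fubini theorem quoted immediately above the lemma, and for the continuity statements I will reduce to the usual dominated-convergence or uniform-convergence arguments on the compact support of $\mathfrak{p}$.

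For the martingale property, fix $s \leq t$. I first need to verify the integrability hypothesis of the conditional Fubini theorem on $(D, dA) \times (\Omega, \mathbb{P})$. Because $\mathfrak{p}$ is smooth with compact support, say in $K \Subset D$, and because $\mathsf{E}|u_t(z)|$ is locally integrable as a function of $z$, we get
\[
\int_D \mathsf{E}|u_t(z)\mathfrak{p}(z)|\, dA(z) = \int_K \mathsf{E}|u_t(z)|\,|\mathfrak{p}(z)|\, dA(z) \leq \|\mathfrak{p}\|_\infty \int_K \mathsf{E}|u_t(z)|\, dA(z) < \infty,
\]
so $u_t(z)\mathfrak{p}(z) \in L^1(D\times \Omega, dA \otimes d\mathbb{P})$. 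Applying the conditional Fubini theorem with $\mathcal{G} = \mathcal{F}_s$ and using the martingale property of $u_t(z)$ for each fixed $z$, I obtain
\[
\mathsf{E}[M_t \mid \mathcal{F}_s] = \int_D \mathsf{E}[u_t(z) \mid \mathcal{F}_s]\,\mathfrak{p}(z)\, dA(z) = \int_D u_s(z)\,\mathfrak{p}(z)\, dA(z) = M_s,
\]
which gives the martingale identity. Adaptedness is automatic from the same Fubini statement applied to $\mathcal{F}_t$, and $\mathsf{E}|M_t| < \infty$ is part of the same integrability bound.

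For the pathwise continuity at $t_0$, I fix $\omega_0$ and write
\[
M_t(\omega_0) - M_{t_0}(\omega_0) = \int_K \bigl(u_t(z,\omega_0) - u_{t_0}(z,\omega_0)\bigr)\, \mathfrak{p}(z)\, dA(z),
\]
where $K$ is the compact support of $\mathfrak{p}$. In the first case, the hypothesis gives the pointwise-in-$z$ dominating function $2\,U(z)\,|\mathfrak{p}(z)|$, which is integrable on $K$; combined with the pointwise continuity $u_t(z,\omega_0) \to u_{t_0}(z,\omega_0)$ coming from the continuity of the sample paths of $u_\cdot(z)$, the dominated convergence theorem yields $M_t(\omega_0) \to M_{t_0}(\omega_0)$. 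In the second case I use the bound
\[
|M_t(\omega_0) - M_{t_0}(\omega_0)| \leq \|\mathfrak{p}\|_\infty \cdot A(K) \cdot \sup_{z\in K}\bigl|u_t(z,\omega_0) - u_{t_0}(z,\omega_0)\bigr|,
\]
and the assumed local uniform continuity of $t\mapsto u_t(\cdot,\omega_0)$ makes the right-hand side vanish as $t\to t_0$.

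The only genuinely delicate point is verifying the hypotheses of the conditional Fubini theorem; everything else is routine once one notices that the compactness of $\mathrm{supp}\,\mathfrak{p}$ converts all of the local integrability / local uniformity assumptions into global ones on $K$. I do not anticipate any further obstacle.
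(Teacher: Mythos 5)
Your proof is correct and follows essentially the same route as the paper: the martingale identity via the conditional Fubini theorem (with the integrability hypothesis checked using the compact support of $\mathfrak{p}$ together with the local integrability of $z\mapsto\mathsf{E}|u_t(z)|$), and the pathwise continuity via dominated convergence in the first case and the trivial sup-norm bound in the second. The only difference is that you spell out the integrability verification and the local-uniform-continuity case slightly more explicitly than the paper does.
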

\begin{proof}
 
Let $\mathfrak{p} \in C^{\infty}_0(D)$. Then $\mathsf{E}|u_t(z) \, \mathfrak{p}(z)| < \infty$, so that $u_t(z) \, \mathfrak{p}(z) \in L^{1}(D \times \Omega, A \times \mathbb{P})$, and by the conditional Fubini theorem, for $0\leq s\leq t < \infty$,
\[
 \mathsf{E} \left[M_t | \mathcal{F}_s\right] = \int_D \mathsf{E} \left[u_t(z)\,| \,\mathcal{F}_s \right] \, \mathfrak{p}(z) \, dA(z) = M_s \quad\textrm{a.s.},
\]
and
\[
 \mathsf{E} |M_t| \leq  \int_D \mathsf{E} |u_t(z)| \,|\mathfrak{p}(z)|\, dA(z) < \infty.
\]

If $u_t(z)$ is bounded by an $L^1_{loc}(D)$-function $U(z)$ for $t\in(t_0-\epsilon, t_0+\epsilon)$, then  continuity at $t_0$ follows from the dominated convergence theorem.
\end{proof}

\begin{proof}[Proof of Proposition~\ref{th:thcoupling}]
For every fixed $z\in \mathbb{H}$, $w_t(z)$ is defined for all $t\in[0,+\infty)$ with probability $1$. On the remaining event of probability zero we can define $w_t(z)$ arbitrarily. 

The process $u_t(z)$ is a continuous martingale. Indeed,  it is a continuous local martingale by the It\^o calculus, moreover, it is bounded, hence a martingale. Since $u_t(z)$ satisfies the assumptions of Lemma~\ref{lem:weightedavg},  the process $(u_t,\mathfrak{p})$ is a continuous martingale for every $\mathfrak{p}\in C^{\infty}_0(\mathbb{H})$.

The quadratic variation of the continuous local martingale $(u_t,\mathfrak{p})$ is a unique adapted continuous process $\langle (u_t,\mathfrak{p})\rangle$ of bounded variation satisfying
\begin{equation}
\label{eq:qv1}
 \langle (u_0,\mathfrak{p})\rangle = 0\quad \textrm{a.s.},
\end{equation}
and such that 
\begin{equation}
\label{eq:qv2}
 (u_t,\mathfrak{p})^2-\langle (u_t,\mathfrak{p})\rangle
\end{equation}
is a continuous local martingale, see e.g., \cite[Problem 5.17]{karatzas}.

Consider
\[
 M_t(z_1, z_2) := u_t(z_1) \, u_t(z_2) + 2\,G_{H_t}(z_1, z_2), \quad z_1, z_2\in \mathbb{H}.
\]
By the It\^o calculus and~\eqref{eq:hadassumption}, $M_t(z_1, z_2)$ is a continuous local martingale for each pair $z_1,z_2 \in \mathbb{H}$, $z_1\neq z_2$. Moreover, $u_t(z_1)u_t(z_2)$ is bounded uniformly for all $t\geq 0$, and $G_{H_t}(z_1, z_2)$ is non-increasing by monotonicity of Green's function. Hence, $M_t(z_1,z_2)$ is uniformly bounded, and is in fact, a martingale for each $z_1, z_2\in \mathbb{H},$ $z_1\neq z_2$. 

We can apply Lemma \ref{lem:weightedavg} twice, and see that
\begin{align*}
 \int_{\mathbb{H}\times \mathbb{H}} &M_t(z_1, z_2) \, \mathfrak{p}(z_1)\,\mathfrak{p}(z_2)\, dA(z_1)\, dA(z_2) \\
&= (u_t, \mathfrak{p})^2 + \int_{\mathbb{H}\times \mathbb{H}} 2\,G_{H_t}(z_1, z_2) \, \mathfrak{p}(z_1)\, \mathfrak{p}(z_2)\,dA(z_1)\, dA(z_2)\\
&= (u_t, \mathfrak{p})^2 + \|\mathfrak{p}\|^2_{\mathcal{E}(H_t)}.
\end{align*}
is a continuous martingale. 

Let us denote $E_t = \|\mathfrak{p}\|^2_{\mathcal{E}(H_t)}$. By monotonicity of Green's function, the process $E_0 - E_t$ is monotonically increasing, and hence, is of bounded variation. Trivially, it satisfies \eqref{eq:qv1}, and by the argument above, it also satisfies \eqref{eq:qv2}, therefore, we can conclude that
\[
 \langle(u_t,\mathfrak{p})\rangle  = E_0-E_t.
\]

Let $\tau_t:= \inf\{s:E_0 - E_{s} = t\}$. By L\'evy's characterization of Brownian motion,  the process $(u_{\tau_t},\mathfrak{p})$ is a standard Brownian motion started at $(u, \mathfrak{p})$. Therefore, $(u_{\tau_t},\mathfrak{p}) \sim \mathcal{N}((u, \mathfrak{p}), t)$. Consequently,
for $T\geq 0,$ the conditional law of $(u_T,\mathfrak{p})$ with respect to the sigma-algebra $\mathcal{G} = \sigma\{E_s:0\leq s \leq T\}$ is $\mathcal{N}((u, \mathfrak{p}), E_0 - E_T)$. 

Now, given $\mathcal{G}$, the random variables $(\Phi_{H_T}, \mathfrak{p})$ and $(u_T,\mathfrak{p})$ are conditionally independent for $\mathfrak{p}\in C^{\infty}_0(\mathbb{H})$ . Given $\mathcal{G}$, their conditional laws  are 
\[
 (\Phi_{H_T}, \mathfrak{p}) \sim \mathcal{N}(0, E_T), \quad  (u_{H_T}, \mathfrak{p}) \sim  \mathcal{N}((u, \mathfrak{p}), E_0 - E_T),
\]
hence their sum has the law
\[
 (\tilde{\Phi}_{H_T}, \mathfrak{p}) \sim \mathcal{N}((u, \mathfrak{p}),E_0).
\]
Thus, $\tilde{\Phi}_{H_T}$ has the same law as $\tilde{\Phi}_{\mathbb{H}}$, and the proposition is proved.
\end{proof}

\section{Slit holomorphic flows that admit coupling with GFF}

It is natural to ask for which slit holomorphic stochastic flows it is possible to find a harmonic function  $u$ satisfying the assumptions of Proposition~\ref{th:thcoupling}.

\subsection{Existence of $u$}

\begin{lemma}
Let $\{w_t\}_{t\geq0}$ be a normalized slit holomorphic stochastic flow in $\mathbb{H}$ 
\begin{equation}
\begin{cases}
 dw_t(z) = -b(w_t(z)) \, dt + \sqrt{\kappa}\, \sigma(w_t(z))\circ dB_t,\\
w_0(z) = z, \quad z\in \mathbb{H}.
\end{cases}
\end{equation}

There exists a harmonic function $u$ as an imaginary part of a pre-pre-Schwarzian form $u_t$ of order $-2\mathfrak{b}$ defined as in Proposition \ref{th:thcoupling} satisfying the conditions
\begin{equation}
\label{eq:hannihilates}
  \left(- \mathcal{L}_{b} + \frac{\kappa}{2} \mathcal{L}^2_{\sigma} \right) u_0  (z) = 0, \quad \textrm{for all } z\in \mathbb{H}
\end{equation}
and
\begin{equation}
\label{eq:hadlem3}
 dG_{H_t}(z_1,z_2) = - \frac12 d \langle (u_t(z_1), u_t(z_2)) \rangle
\end{equation}
if and only if there exist some constants $\alpha, \beta \in \mathbb{R}$ such that 
\begin{equation}
\label{eq:bsigmarel}
b(z) =  \frac{- \sqrt{\kappa} \sigma(z) \,(\sqrt{\kappa} \sigma(z) + \beta z^2)  -\mathfrak{b}\, \sqrt{2\kappa }\,z^2 \, (b'(z) \, \sigma(z) - b(z)\, \sigma'(z))}{z \,(\alpha z + 2)}
\end{equation}
for all $z\in \mathbb{H}.$ 

In that case, $u$ is given by
\begin{equation}
\label{eq:1observable}
 u(z) = -\sqrt{\frac{2}{\kappa}}\, \im \int \frac{2+\alpha \,z}{z \,\sigma(z)}\,dz + 2\mathfrak{b} \arg \sigma(z).
\end{equation}

\end{lemma}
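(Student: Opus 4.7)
The plan is to reformulate both conditions holomorphically: write $u=\im f$ for a holomorphic pre-pre-Schwarzian $f$ of order $-2\mathfrak{b}$, so that $\mathcal{L}_\sigma u=\im(\sigma f'-2\mathfrak{b}\sigma')$ and $\mathcal{L}_b u=\im(bf'-2\mathfrak{b}b')$, and translate \eqref{eq:hadlem3} and \eqref{eq:hannihilates} into pointwise algebraic identities that can be solved first for $f'$ and then for $b$.

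First, I would compute both sides of \eqref{eq:hadlem3}. Conformal invariance gives $G_{H_t}(z_1,z_2)=G_{\mathbb{H}}(w_t(z_1),w_t(z_2))$; since $\sigma$ generates M\"obius automorphisms of $\mathbb{H}$, \eqref{eq:lsigma} extends to $\mathcal{L}_\sigma^2 G_{\mathbb{H}}=0$, so combining \eqref{eq:itolie} with \eqref{eq:liehadamard} yields
\[
dG_{H_t}(z_1,z_2)=-4\,\im\tfrac{1}{w_t(z_1)}\,\im\tfrac{1}{w_t(z_2)}\,dt,
\]
while \eqref{eq:liequadcov} gives $d\langle u_t(z_1),u_t(z_2)\rangle=\kappa\,(\mathcal{L}_\sigma u)(w_t(z_1))(\mathcal{L}_\sigma u)(w_t(z_2))\,dt$. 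Since $(w_t(z_1),w_t(z_2))$ sweeps out $\mathbb{H}\times\mathbb{H}$, the Hadamard condition \eqref{eq:hadlem3} reduces to the pointwise factorised identity
\[
(\mathcal{L}_\sigma u)(w_1)\,(\mathcal{L}_\sigma u)(w_2)=\tfrac{8}{\kappa}\im\tfrac{1}{w_1}\im\tfrac{1}{w_2},\qquad w_1,w_2\in\mathbb{H}.
\]
Fixing $w_2$ and varying $w_1$ forces $\mathcal{L}_\sigma u(w)=\pm\sqrt{8/\kappa}\,\im(1/w)$. As two holomorphic functions with matching imaginary parts on $\mathbb{H}$ differ by a real constant, this lifts uniquely to
\[
\sigma(z)\,f'(z)-2\mathfrak{b}\,\sigma'(z)=-\sqrt{\tfrac{2}{\kappa}}\,\frac{2+\alpha z}{z}
\]
for some $\alpha\in\mathbb{R}$ (absorbing the real constant of integration). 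Dividing by $\sigma$, integrating, and taking imaginary parts recovers exactly the formula \eqref{eq:1observable}.

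Next, I would substitute this lift into \eqref{eq:hannihilates}. Writing $P:=\sigma f'-2\mathfrak{b}\sigma'$ and using $\mathcal{L}_b f=bf'-2\mathfrak{b}b'$ together with $\mathcal{L}_\sigma^2 f=\sigma\partial(\sigma f'-2\mathfrak{b}\sigma')$, a direct computation that replaces $\sigma f'$ by $P+2\mathfrak{b}\sigma'$ collapses the generator to
\[
\bigl(-\mathcal{L}_b+\tfrac{\kappa}{2}\mathcal{L}_\sigma^2\bigr)f=\frac{-bP+2\mathfrak{b}(b'\sigma-b\sigma')+\tfrac{\kappa}{2}\sigma^2 P'}{\sigma}.
\]
The PDE \eqref{eq:hannihilates} says this holomorphic expression has vanishing imaginary part on $\mathbb{H}$, hence is a real constant, which I parameterise as $-\sqrt{2}\beta$ with $\beta\in\mathbb{R}$. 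Substituting $P=-\sqrt{2/\kappa}(2+\alpha z)/z$ and $P'=2\sqrt{2/\kappa}/z^2$, multiplying through by $z^2\sigma$, and using $\sqrt{\kappa/2}\cdot\sqrt{2\kappa}=\kappa$ and $\sqrt{\kappa/2}\cdot\sqrt{2}=\sqrt{\kappa}$ to clean the coefficients, I recover exactly \eqref{eq:bsigmarel}. The converse direction runs these steps backwards: starting from $\alpha,\beta$ satisfying \eqref{eq:bsigmarel}, the formula \eqref{eq:1observable} defines $u$ whose lift $f$ automatically obeys $\mathcal{L}_\sigma u=-\sqrt{8/\kappa}\,\im(1/z)$ (hence \eqref{eq:hadlem3}), and \eqref{eq:bsigmarel} is precisely the condition for $(-\mathcal{L}_b+\tfrac{\kappa}{2}\mathcal{L}_\sigma^2)f$ to be a real constant, i.e.\ \eqref{eq:hannihilates}.

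The main obstacle I expect is the algebraic bookkeeping in the third step: tracking signs and the interplay among the three factors $\sqrt{\kappa/2}$, $\sqrt{2/\kappa}$, and $\sqrt{2\kappa}$, and correctly identifying the two free real parameters $\alpha$ (the integration constant from the Hadamard lift) and $\beta$ (the real constant from the annihilation condition) with those appearing in \eqref{eq:bsigmarel}.
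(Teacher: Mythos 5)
Your proposal is correct and follows essentially the same route as the paper: reduce \eqref{eq:hadlem3} via \eqref{eq:itolie}, \eqref{eq:liehadamard}, \eqref{eq:lsigma} and \eqref{eq:liequadcov} to the pointwise identity $\mathcal{L}_\sigma u=\pm\sqrt{8/\kappa}\,\im(1/z)$, lift it holomorphically with a real integration constant $\alpha$ to obtain \eqref{eq:1observable}, and then express the generator condition \eqref{eq:hannihilates} as a holomorphic quantity with vanishing imaginary part, hence equal to a real constant $-\sqrt2\beta$, which rearranges to \eqref{eq:bsigmarel}. Your intermediate formulas (e.g.\ $P=\sigma f'-2\mathfrak b\sigma'=-\sqrt{2/\kappa}\,(2+\alpha z)/z$ and the collapsed generator) agree with the paper's computation, and your explicit remark that the converse runs the same steps backwards is a point the paper leaves implicit.
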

\begin{proof}
Suppose that such a function $u$ exists. 
By \eqref{eq:lieinfgen}, \eqref{eq:liehadamard} and \eqref{eq:lsigma}, we have that
\begin{align*}
 dG_{H_t}(z_1,z_2) 
&= \left(- \mathcal{L}_{b} + \frac{\kappa}{2} \mathcal{L}^2_{\sigma}\right) G_{\mathbb{H}}(w_t(z_1), w_t(z_2)) \,dt + \sqrt{\kappa}  \, \mathcal{L}_{\sigma} G_{\mathbb{H}} (w_t(z_1), w_t(z_2)) \circ dB_t \\
&= - 4 \im \frac{1}{w_t(z_1)}\, \im \frac{1}{w_t(z_2)}.
\end{align*}
By \eqref{eq:liequadcov},
\[
- \frac12 d \langle (u_t(z_1), u_t(z_2)) \rangle =- \frac{\kappa}{2}\, \mathcal{L}_{\sigma} u(w_t(z_1))\, \mathcal{L}_{\sigma} u(w_t(z_2)),
\]
and we rewrite \eqref{eq:hadlem3} as
\[
 -4\,\im \frac{1}{w_t(z_1)}\, \im \frac{1}{w_t(z_2)} = - \frac{\kappa}{2}\, \mathcal{L}_{\sigma} u(w_t(z_1))\, \mathcal{L}_{\sigma} u(w_t(z_2)).
\]
Then in the upper half-plane uniformization, we have 
\[
\pm\im \frac{2}{z} = \sqrt{\frac{\kappa}{2}} \,\mathcal{L}_{\sigma} u (z),
\]
where the sign choice depends on normalization of the vector field $b$. 
\todo{Explain this}
In the forward case we choose 
\[
-\im \frac{2}{z} = \sqrt{\frac{\kappa}{2}} \,\mathcal{L}_{\sigma} u (z).
\]

Let $u_{+}$ be a holomorphic pre-pre-Schwarzian form of order $-2\mathfrak{b}$ such  that $u = \im u^{+}$. Then in the upper half-plane uniformization
\[
-\im \frac{2}{z} = \sqrt{\frac{\kappa}{2}} \,\im \mathcal{L}_{\sigma} u^{+} (z),
\]
and
\[
 -\frac{2}{z} = \sqrt{\frac{\kappa}{2}} \, \left(\sigma(z) ({u^{+}})' (z) -2\mathfrak{b}\, \sigma'(z)\right) + \alpha,
\]
for some real constant $\alpha$, and \eqref{eq:1observable} follows.

Now, to make sure that \eqref{eq:hannihilates} is satisfied, we calculate
\[
 \mathcal{L}^2_{\sigma} \,u (z) = 2 \,\sqrt{\frac{2}{\kappa}} \, \im \frac{\sigma(z)}{z^2}.
\]
Here, we use the fact that Lie derivative of a pre-pre-Schwarzin form is a $(0,0)$-differential.
Also we calculate 
\[
 \mathcal{L}_{b} u (z) = -\sqrt{\frac{2}{\kappa}}\,\im \left[b(z)\, \frac{2 + \alpha z}{z \, \sigma(z)}\right] +2\mathfrak{b} \, \im\left[\frac{b(z) \,\sigma'(z) - b'(z)\, \sigma(z)}{\sigma(z)}\right]. 
\]
Thus we have 
\begin{align*}
\left(- \mathcal{L}_{b} + \frac{\kappa}{2} \mathcal{L}^2_{\sigma} \right) u  (z) 
=  \im \left[ \sqrt{\frac{2}{\kappa}}\, b(z) \frac{2+\alpha z}{z \, \sigma(z)} + \sqrt{2\, \kappa}\, \frac{\sigma(z)}{z^2} +2\mathfrak{b}\, \frac{b'(z) \, \sigma(z) - b(z)\, \sigma'(z)}{\sigma(z)}\right],
\end{align*}
and  condition \eqref{eq:hannihilates} can be reformulated as
\[
  \sqrt{\frac{2}{\kappa}}\, b(z) \frac{2+\alpha z}{z \, \sigma(z)} + \sqrt{2\, \kappa}\, \frac{\sigma(z)}{z^2}  +2\mathfrak{b}\, \frac{b'(z) \, \sigma(z) - b(z)\, \sigma'(z)}{\sigma(z)}= -\sqrt{2} \beta, 
\]
for some real constant $\beta$, and \eqref{eq:bsigmarel} follows.

\end{proof}

\begin{lemma}
\label{lem:linsystem}
 Let \begin{equation}
\label{eq:bsigmarepr}
 b(z) = -\left(\frac{2}{z} + b_{-1} +b_0 z + b_1 z^2\right),\quad \sigma(z) = -(1 + \sigma_0 z + \sigma_1 z^2).
\end{equation}
Then  relation~\eqref{eq:bsigmarel} holds for all $z\in \mathbb{H}$ if and only if 
$\mathfrak{b} =\sqrt{\kappa/8} - \sqrt{2/\kappa}$ and 
\begin{equation}
\label{eq:linsystem}
 \begin{cases}
b_{-1}=-\alpha+4\sigma_0,\\
(2\alpha+(\kappa-4)\sigma_0)b_{-1}-(\kappa-8)b_0
=-2\beta\sqrt{\kappa}+2\kappa\sigma_0^2-2\kappa\sigma_1+24\sigma_1,\\
(\kappa-4)\sigma_1b_{-1}+\alpha b_0-(\kappa-6)b_1=-\beta\sqrt{\kappa}\sigma_0+2\kappa
\sigma_0\sigma_1,\\
(\kappa-4)\sigma_1b_0-((\kappa-4)\sigma_0-2\alpha)b_1=(-2\beta\sqrt{\kappa}+2\kappa\sigma_1)\sigma_1.
\end{cases}
\end{equation}
\end{lemma}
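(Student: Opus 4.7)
The plan is to turn the functional identity \eqref{eq:bsigmarel} into a polynomial identity in $z$ and then read off each coefficient. First I would multiply through by the denominator $z(\alpha z+2)$, obtaining the equivalent identity
\[
 b(z)\,z(\alpha z+2) + \kappa\,\sigma(z)^2 + \beta\sqrt{\kappa}\,z^2\sigma(z) + \mathfrak{b}\sqrt{2\kappa}\,z^2\bigl(b'(z)\sigma(z) - b(z)\sigma'(z)\bigr) = 0.
\]
Using the explicit forms \eqref{eq:bsigmarepr}, one checks that $z\,b(z)$ is a polynomial of degree three, that $\sigma(z)$ is a polynomial of degree two, and that $z^2\bigl(b'(z)\sigma(z)-b(z)\sigma'(z)\bigr)$ is a polynomial of degree at most four (the apparent poles at $z=0$ coming from the $2/z$ in $b$ and the $2/z^2$ in $b'$ cancel). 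Hence the left-hand side is a polynomial in $z$ of degree four, and the identity is equivalent to five scalar equations obtained by setting the coefficients of $z^{0}, z^{1}, z^{2}, z^{3}, z^{4}$ to zero.

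Next I would compute these coefficients in order. A short calculation shows that the constant term equals $-4+\kappa-2\mathfrak{b}\sqrt{2\kappa}$, so the $z^{0}$ equation forces
\[
 \mathfrak{b}\sqrt{2\kappa}=\frac{\kappa-4}{2}, \qquad \text{i.e.}\quad \mathfrak{b}=\frac{\kappa-4}{2\sqrt{2\kappa}}=\sqrt{\frac{\kappa}{8}}-\sqrt{\frac{2}{\kappa}},
\]
which is precisely the claimed value. Substituting this value back into the coefficients of $z,z^{2},z^{3},z^{4}$ and grouping the linear contributions in $b_{-1},b_{0},b_{1}$ on the left-hand side should produce exactly the four equations of the system \eqref{eq:linsystem}; for instance, the coefficient of $z$ collapses to $-2\alpha-2b_{-1}+8\sigma_0=0$, giving the first line of \eqref{eq:linsystem}. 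The converse direction is then automatic, since reversing each of the algebraic manipulations reconstructs the polynomial identity, and hence \eqref{eq:bsigmarel}.

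The main obstacle is purely bookkeeping in the expansion of $z^{2}(b'\sigma-b\sigma')$: the products $b'\sigma$ and $b\sigma'$ individually contain terms in $z^{-2}$ and $z^{-1}$, which must be verified to cancel in the difference, and the cross-terms in the remaining polynomial part must be collected correctly in order for the $z^2,z^3,z^4$ equations to match \eqref{eq:linsystem} exactly. Apart from this careful algebra, the argument is a straightforward symbolic computation, matching coefficient by coefficient, with no conceptual obstruction.
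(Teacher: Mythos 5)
Your proposal is correct and follows exactly the paper's (one-line) proof: substitute \eqref{eq:bsigmarepr} into \eqref{eq:bsigmarel}, multiply by $z(\alpha z+2)$, and equate the coefficients of $z^0,\dots,z^4$ — the $z^0$ coefficient $-4+\kappa-2\mathfrak{b}\sqrt{2\kappa}$ fixes $\mathfrak{b}=\sqrt{\kappa/8}-\sqrt{2/\kappa}$ and the remaining four coefficients give \eqref{eq:linsystem}, with your sample computations (e.g.\ $-2\alpha-2b_{-1}+8\sigma_0=0$ from the $z^1$ term) checking out. One minor inaccuracy in your parenthetical: the singular terms of $b'\sigma-b\sigma'$ do not cancel (they are $-2/z^2-4\sigma_0/z$); what matters is only that the pole at the origin has order at most two, so that $z^2(b'\sigma-b\sigma')$ is a polynomial of degree at most four, which is all your argument actually uses.
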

\begin{proof}
After substituting \eqref{eq:bsigmarepr} in \eqref{eq:bsigmarel} and multiplying both sides by $z\,(\alpha z + 2)$, we equate the coefficients at powers of $z$ and arrive at the relations above.
\end{proof}

Let $a = \sqrt{2/\kappa}$ and $\mathfrak{b} =\sqrt{\kappa/8} - \sqrt{2/\kappa}.$

\begin{proposition}
\label{th:classification}
Up to a conjugation with a conformal isomorphism, there are only the following four slit holomorphic stochastic flows in $\mathbb{H}$  \todo{The ``only if'' part should also be proved}  admitting the coupling with the Dirichlet modifications of the GFF satisfying \eqref{eq:couplingcond}:

\begin{enumerate}
\item  \label{th:classification chordal} Chordal {\rm SLE} with drift $\alpha t$, $\alpha \in \mathbb{R}$
\[
 b(z)  = -\frac{2}{z} + \alpha, \quad \sigma(z) = -1,
\]
\[
 u(z) = 2a\, \arg z + \alpha a \, \im z;
\]
\item \label{th:classification chordal2} The parabolic ($\sigma(z) = -1$) family with $\alpha=0$
\[
b(z)  = -\frac{2}{z} - \,\frac{2 \,\beta\,\sqrt{\kappa}}{\kappa - 8}\, z, \quad \beta\in \mathbb{R}, 
\]
\[
 u(z) = 2a\, \arg z;
\]

 \item \label{th:classification dipolar} Dipolar {\rm SLE} with drift  $\alpha t,$ $\alpha \in \mathbb{R}$
\[ 
b(z) =-\frac{2}{z} + \alpha + \frac{z}{2}- \frac{\alpha}{4} \, z^2,
\quad \sigma(z) = - 1+ \frac14 z^2, 
\]
\[
u(z) = 2a  \arg z +(2\mathfrak{b}-a(1+\alpha)) \arg(2-z)+(2\mathfrak{b}-a(1-\alpha)) \arg(2+z) ;
\]
\item \label{th:classification dipolar2} The hyperbolic ($\sigma(z) = - 1+ \frac14 z^2$) family with $\alpha = \pm(\kappa-6)/2$
\[
b(z) = -\frac{2}{z} \pm \frac{\kappa-6}{2} -\left(\frac{3-\kappa}{2} +\frac{2\beta \sqrt{\kappa}}{\kappa-8}\right) z \pm \frac18\,\left(\kappa-2 - \frac{8\,\beta\,\sqrt{\kappa}}{\kappa-8}\right) z^2, \quad\beta \in \mathbb{R}
\]
\[ 
u(z) = (\kappa - 6) a\, \arg(2\pm z)+ 2a\,\arg z.
\]
\item \label{th:classification radial} Radial {\rm SLE}, $\kappa=6$, with drift  $\alpha t,$ $\alpha \in \mathbb{R}$, 
$$ b(z) = -\frac{2}{z} + \alpha - \frac{z}{2} +\alpha\,\frac{z^2}{4},\quad \sigma(z) = - 1- \frac14 z^2,$$
$$u(z)=-  \alpha \,\sqrt{\frac{1}{3}}\, \log\left|\frac{z-2i}{z+2i}\right|+ \sqrt{\frac{4}{3}} \arg z.$$
\end{enumerate}
 
\end{proposition}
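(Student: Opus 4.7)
The proof proceeds by first using conformal invariance to normalize $\sigma$ and then solving the polynomial system of Lemma~\ref{lem:linsystem} in each case. Since $\sigma$ is a complete vector field generating a flow of Möbius automorphisms of $\mathbb{H}$, its zeros on $\overline{\mathbb{H}}$ are fixed points of this flow. The quadratic $\sigma(z) = -1 - \sigma_0 z - \sigma_1 z^2$ falls into three conjugacy classes according to the discriminant $\sigma_0^2 - 4\sigma_1$: parabolic (a real double root on $\mathbb{R}$), hyperbolic (two distinct real roots), and elliptic (a complex conjugate pair). Up to conjugation by a Möbius transformation preserving $\mathbb{H}$, these three classes correspond exactly to the canonical forms $\sigma(z) = -1$, $\sigma(z) = -1 + z^2/4$, and $\sigma(z) = -1 - z^2/4$, respectively.

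For each canonical $\sigma$, the plan is to substitute $\sigma_0, \sigma_1$ into the system~\eqref{eq:linsystem}, treat $(\alpha,\beta)$ as additional unknowns, and enumerate all admissible tuples $(b_{-1}, b_0, b_1, \alpha, \beta)$. In the parabolic case $\sigma_0 = \sigma_1 = 0$ the system collapses to $b_{-1} = -\alpha$, $2\alpha b_1 = 0$, $\alpha b_0 - (\kappa-6)b_1 = 0$, together with an equation determining $\beta$ from $b_0$ and $\alpha$. Splitting according to whether $\alpha$ vanishes yields case~(\ref{th:classification chordal}) (with $b_0 = b_1 = 0$ and $\beta = \alpha^2/\sqrt{\kappa}$) and case~(\ref{th:classification chordal2}) (with $\alpha = b_{-1} = 0$ and $b_0 = 2\beta\sqrt{\kappa}/(\kappa-8)$). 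The hyperbolic normalization gives $\sigma_0 = 0,\ \sigma_1 = -1/4$, and the system becomes four equations in $b_{-1}, b_0, b_1, \beta$ parametrized by $\alpha$; solving it shows that $b_{-1} = -\alpha$, $b_1$ is proportional to $\alpha$, and the branches split depending on whether $\alpha = \pm(\kappa-6)/2$ (case~(\ref{th:classification dipolar2})) or not (case~(\ref{th:classification dipolar})). The elliptic normalization $\sigma_1 = 1/4$ forces an algebraic obstruction whose consistency requires $\kappa = 6$, producing case~(\ref{th:classification radial}).

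With $b$ and $\sigma$ in hand for each case, the harmonic function $u$ is computed from~\eqref{eq:1observable}: the integral $\int (2 + \alpha z)/(z\sigma(z))\,dz$ is elementary in every canonical form. For $\sigma = -1$, partial fractions give $u = 2a\,\arg z + a\alpha\,\im z$. For $\sigma = -1 + z^2/4$, the logarithmic derivative of $(2-z)(2+z)$ produces the two boundary terms $\arg(2\pm z)$ appearing in cases~(\ref{th:classification dipolar})--(\ref{th:classification dipolar2}), and the pre-pre-Schwarzian correction $2\mathfrak{b}\,\arg \sigma(z) = 2\mathfrak{b}(\arg(2-z)+\arg(2+z))$ combines with the integral to give the precise weights stated. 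For $\sigma = -1 - z^2/4$, the analogous decomposition involves the complex conjugate pair $\pm 2i$, giving $\log|(z-2i)/(z+2i)|$ and recovering the radial expression.

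The main obstacle is the bookkeeping required for the ``only if'' direction: one must carefully track the subcases of~\eqref{eq:linsystem} generated by the potential vanishing of the denominators $\kappa-8$, $\kappa-6$, $\alpha$, and $(\kappa-4)\sigma_0 - 2\alpha$, and verify that every consistent branch either coincides, after a Möbius conjugation preserving $\mathbb{H}$, with one of the five listed families or leads to a contradiction with the requirement that $\sigma$ be complete and $b$ semicomplete. The ``if'' direction is the routine verification that each listed pair $(b,\sigma,u)$ satisfies~\eqref{eq:bsigmarel} and hence fulfils the hypotheses of Proposition~\ref{th:thcoupling}.
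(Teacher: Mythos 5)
Your overall route is the same as the paper's: normalize $\sigma$ into the parabolic, hyperbolic and elliptic conjugacy classes, feed each canonical form into the system of Lemma~\ref{lem:linsystem}, and read off $u$ from~\eqref{eq:1observable}. The parabolic and hyperbolic branches are handled as in the paper (in the hyperbolic case the consistency condition factors as $(1-\alpha^2+\beta\sqrt\kappa)\bigl(4\alpha^2-(\kappa-6)^2\bigr)=0$, which is exactly your dichotomy between cases~(\ref{th:classification dipolar}) and~(\ref{th:classification dipolar2})). However, your treatment of the elliptic case contains a genuine error. There is no ``algebraic obstruction whose consistency requires $\kappa=6$'': with $\sigma_1=1/4$ the analogous factorization is $(1+\alpha^2-\beta\sqrt\kappa)\bigl(4\alpha^2+(\kappa-6)^2\bigr)=0$, and since the second factor never vanishes for real $\alpha$ (except at the single point $\alpha=0$, $\kappa=6$), the system is solved for \emph{every} $\kappa$ by taking $\beta=(1+\alpha^2)/\sqrt\kappa$. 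The restriction to $\kappa=6$ comes from a different place entirely: the resulting $u$ contains the term $\frac{\kappa-6}{\sqrt{2\kappa}}\arg(4+z^2)$, and because $4+z^2$ vanishes at $z=2i\in\mathbb{H}$, no branch of $\arg(4+z^2)$ is continuous on all of $\mathbb{H}$. Hence $u$ fails to be a single-valued harmonic function unless the coefficient $\kappa-6$ vanishes. If you only do the algebra, you would wrongly conclude that the elliptic family exists for all $\kappa$.

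A second, smaller gap is your closing claim that the ``if'' direction is ``routine verification'' of the hypotheses of Proposition~\ref{th:thcoupling}. Those hypotheses include that $|u_t|$ is dominated by a continuous function independent of $t$, and in case~(\ref{th:classification chordal}) the function $u(z)=2a\arg z+\alpha a\im z$ is unbounded, so this is not automatic. The paper's argument uses that $d\,\im w_t(z)=-2\,\im w_t(z)/|w_t(z)|^2\,dt$, so $\im w_t(z)$ is decreasing and $|u_t(z)|\le \alpha a\,\im z+2a\pi$; you need some such observation to legitimately invoke Proposition~\ref{th:thcoupling}. Finally, you correctly flag that the degenerate denominators ($\kappa=6,8$) spawn extra branches; the paper records these (e.g.\ $b_1$ arbitrary when $\kappa=6$ in the parabolic case) without fully disposing of them, so your plan to ``verify that every consistent branch coincides with one of the listed families or leads to a contradiction'' is a promissory note rather than a proof in both your write-up and the original.
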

\begin{proof}
We want to find all possible combinations of $b$ and $\sigma$ such that \eqref{eq:bsigmarel} is satisfied.

We identify flows that differ by a conjugation with an automorphism of $\mathbb{H}$. Let $\{w_t\}_{t\geq 0}$ be the flow of
\[
\begin{cases}
 dw_t(z) = -b(w_t(z)) \, dt + \sqrt{\kappa}\, \sigma(w_t(z))\circ dB_t,\\
w_0(z) = z, \quad z\in \mathbb{H}.
\end{cases}
\]
If $\phi:\mathbb{H}\to \mathbb{H}$ is a conformal automorphism of the upper half-plane, then the flow $\{\tilde{w}_{t}:= \phi\circ w_t \circ \phi^{-1}\}_{t\geq 0}$ satisfies
\[
\begin{cases}
 d\tilde{w}_t(z) = -\phi_ *b(\tilde{w}_t(z)) \, dt + \sqrt{\kappa}\, \phi_* \sigma(\tilde{w}_t(z))\circ dB_t,\\
\tilde{w}_0(z) = z, \quad z\in \mathbb{H},
\end{cases}
\]
with $\phi_*$ denoting the push-forward of vector fields, e.g., $$\phi_* \sigma (z) = \frac{1}{(\phi^{-1})'(z)} \, \sigma (\phi^{-1}(z)).$$ 

We can define an equivalence relation on the set of normalized complete fields by letting $\sigma \sim \widetilde\sigma$ if there exists an automorphism $\phi:\mathbb{H} \to \mathbb{H}$ such that $\sigma = \phi_* \widetilde\sigma$. This equivalence relation partitions the set of $\sigma$'s into three equivalence classes, and it suffices to work with one representative in each class:
\renewcommand{\theenumi}{\roman{enumi}}
\begin{enumerate}
\medskip\item $\sigma(z) = -1$ (parabolic, as in the chordal SLE),
\medskip\item $\sigma(z) = -1 + \frac14 z^2$ (hyperbolic, as in the dipolar SLE).
\medskip\item $\sigma(z) = -1 - \frac14 z^2$ (elliptic, as in the radial SLE),
\end{enumerate}
\renewcommand{\theenumi}{\alph{enumi}}
\medskip

Next, for each of these three vector fields $\sigma$ we apply Lemma \ref{lem:linsystem} to determine vector fields $b$ such that \eqref{eq:bsigmarel} is satisfied. We simultaneously calculate the harmonic functions $u$ using \eqref{eq:1observable}.
For each of these three vector fields $\sigma,$ we have $\sigma_0=0$ and therefore, the system \eqref{eq:linsystem} becomes 
$b_{-1}=-\alpha$ and 
\begin{equation}
\label{eq:linsystem1}
 \begin{cases}
(\kappa-8)b_0=-2\alpha^2+2\beta\sqrt{\kappa}+(2\kappa-24)\sigma_1,\\
\alpha b_0-(\kappa-6)b_1=\sigma_1(\kappa-4)\alpha,\\
(\kappa-4)\sigma_1b_0+2\alpha b_1=(-2\beta\sqrt{\kappa}+2\kappa\sigma_1)\sigma_1.
\end{cases}
\end{equation}

In the \textbf{parabolic} case, we have $\sigma_1=0$ and therefore, the system \eqref{eq:linsystem1} becomes 
$$
 \begin{cases}
(\kappa-8)b_0=-2\alpha^2+2\beta\sqrt{\kappa},\\
\alpha b_0-(\kappa-6)b_1=0,\\
\alpha b_1=0.
\end{cases}
$$

\noindent (Case 1) If $\alpha\ne 0,$  then $b_0=b_1=0$ and $\beta = \alpha^2/\sqrt\kappa.$ 
This solution gives \eqref{th:classification chordal}. 

\noindent (Case 2) If $\alpha = 0,$ then $b_0 = 2\beta\sqrt\kappa/(\kappa-8)$ and $b_1=0$ unless $\kappa=6$ or $8.$
This solution gives \eqref{th:classification chordal2}.
When $\kappa =6,$ then $b_1$ can be arbitrary. 
When $\kappa=8$ and $\beta = 0,$ then $b_0$ can be arbitrary.

In the \textbf{hyperbolic} case, we have $\sigma_1=-1/4$ and therefore, the system \eqref{eq:linsystem1} becomes 
$$
\begin{cases}
(\kappa-8)b_0=-2\alpha^2+2\beta\sqrt{\kappa}-\kappa/2+6,\\
\alpha b_0-(\kappa-6)b_1=(1-\kappa/4)\alpha,\\
(1-\kappa/4)b_0+2\alpha b_1=\beta\sqrt{\kappa}/2+\kappa/8.
\end{cases}
$$
If the solution $(b_0,b_1)$ to the first two linear equations is substituted into the last one, it becomes
$$\frac{(1-\alpha^2+\beta\sqrt\kappa)(4\alpha^2-(\kappa-6)^2)}{(\kappa-8)(\kappa-6)}=0.$$

\noindent (Case 3) If $1-\alpha^2+\beta\sqrt\kappa = 0,$ then we have $(b_0,b_1) = (-1/2,\alpha/4).$ This solution gives \eqref{th:classification dipolar}.

\noindent (Case 4) If $\alpha = \pm(\kappa-6)/2,$ then we have 
$$b_0 =   \frac{3-\kappa}{2} +\frac{2\beta \sqrt{\kappa}}{\kappa-8},
\qquad b_1 = \mp \frac18\,\left(\kappa-2 - \frac{8\,\beta\,\sqrt{\kappa}}{\kappa-8}\right).$$
This solution gives \eqref{th:classification dipolar2}.

In the \textbf{elliptic} case, we have $\sigma_1=1/4$ and therefore, the system \eqref{eq:linsystem1} becomes 
$$
\begin{cases}
(\kappa-8)b_0=-2\alpha^2+2\beta\sqrt{\kappa}+\kappa/2-6,\\
\alpha b_0-(\kappa-6)b_1=(\kappa/4-1)\alpha,\\
(\kappa/4-1)b_0+2\alpha b_1=-\beta\sqrt{\kappa}/2+\kappa/8.
\end{cases}
$$
If the solution $(b_0,b_1)$ to the first two linear equations is substituted into the last one, it becomes
$$\frac{(1+\alpha^2-\beta\sqrt\kappa)(4\alpha^2+(\kappa-6)^2)}{(\kappa-8)(\kappa-6)}=0.$$

\noindent (Case 5) Since $\alpha$ is real, we have $\beta =(1+\alpha^2)/\sqrt\kappa.$ 
Therefore, there is only one solution 
$$ b(z) = -\frac{2}{z} + \alpha - \frac{z}{2} +\alpha\,\frac{z^2}{4}.$$
In this case, 
\begin{eqnarray*}
u(z) &=&  -  \alpha \,\sqrt{\frac{8}{\kappa}}\,\im \arctan \frac{2}{z} + \sqrt{\frac{8}{\kappa}} \arg z + \frac{\kappa - 6}{\sqrt{2 \kappa}}\arg(4+z^2)\\
       &=&  -  \alpha \,\sqrt{\frac{2}{\kappa}}\, \log\left|\frac{z-2i}{z+2i}\right|+ \sqrt{\frac{8}{\kappa}} \arg z + \frac{\kappa - 6}{\sqrt{2 \kappa}}\arg(4+z^2).
\end{eqnarray*}
However, the branch of $\arg(4+z^2)$ cannot be chosen in such a way that $u$ is continuous in the whole upper half-plane unless $\kappa=6$, therefore, we cannot apply Proposition \ref{th:thcoupling}.
The  choice $\kappa=6$ leads to the radial SLE(6) with drift \eqref{th:classification radial}.

To ensure that $u_t(z)$ in \eqref{th:classification chordal} is bounded by a continuous function uniformly with respect to $t$, we note that 
\[
 d \im w_t(z) = - 2\, \frac{\im w_t(z)}{|w_t(z)|^2}\,dt,
\]
that is, $\im w_t(z)$ is a decreasing function. Thus, $|u_t(z)| \leq \alpha a\,\im z + 2a\pi$, and we can apply Proposition \ref{th:thcoupling}.

In each of the cases (\ref{th:classification chordal2}--\ref{th:classification radial}), the function $u_t(z)$ is bounded by a constant, so that Proposition \ref{th:thcoupling} can be applied as well.
\end{proof}

\begin{remark*}
The cases (\ref{th:classification chordal}), (\ref{th:classification dipolar}), and (\ref{th:classification radial}) correspond, respectively, to the chordal, dipolar, and radial $\mathrm{SLE}_\alpha(\kappa)$ (in the last case $\kappa=6$) driven by $\sqrt\kappa B_t + \alpha \,t$  (Brownian motion with drift). 
In the next two sections, we implement conformal field theory of chordal and dipolar  $\mathrm{SLE}_\alpha(\kappa).$

The case (\ref{th:classification chordal2}) was mentioned in \cite{ivanov2014general}, and when $\kappa = 4$ the curves generated by this flow have the same law as the chordal SLE(4) stopped at the time $1/(4\beta)$.

\end{remark*}

\section{CFT of chordal SLE with drift}

In this section, we study certain martingale-observables of the chordal SLE with drift, see item \eqref{th:classification chordal}, Proposition~\ref{th:classification}, by means of a version of conformal field theory
with the Dirichlet boundary conditions in which the Fock space  fields are constructed by a special background charge modification of GFF $\Phi=\Phi_{(0)}$ with the Dirichlet boundary conditions.

\subsection{Modifications of Gaussian free field}
Given a simply connected hyperbolic domain $D$ with a marked point $q\in \partial D$, let us consider a (non-unique) conformal map $w\colon (D,q)\to (\mathbb H,\infty)$. 
For a real parameter $\mathfrak{b},$ $\Phi_{(\mathfrak{b})}$ stands for a background charge modification of $\Phi_{(0)}$ by
\[
\Phi_{(\mathfrak{b})}=\Phi_{(0)}-2\mathfrak{b}\arg w',
\]
see \cite[Section 10.1]{MK}, where we interchanged $\mathfrak{b}=b$ in order to be compatible with the previous text. The current field $J_{(\mathfrak{b})}=\partial \Phi_{(\mathfrak{b})}$ is  a pre-Schwarzian form of order $i\mathfrak{b}$, i.e., $J_{(\mathfrak{b})}=J_{(0)}+i\mathfrak{b}\,{w''}/{w'}$. The field $\Phi_{(\mathfrak{b})}$ has a stress tensor, and the Virasoro field is given by $T_{(\mathfrak{b})}=-\frac{1}{2}J*J+i\mathfrak{b}\,\partial J$. The central charge of $\Phi_{(\mathfrak{b})}$ is $c=1-12\mathfrak{b}^2$.

Let $a = \sqrt{2/\kappa}$ and $\mathfrak{b} = \sqrt{\kappa/8} - \sqrt{2/\kappa}.$ 
For $u = \alpha a\,\im\, w,$ we define a further modification $\Phi_{(\mathfrak{b},u)}$ of $\Phi_{(\mathfrak{b})}$ by
$$\Phi_{(\mathfrak{b},u)} = \Phi_{(\mathfrak{b})} +u, \quad J_{(\mathfrak{b},u)} =\partial \Phi_{(\mathfrak{b},u)}, \quad T_{(\mathfrak{b},u)} = T_{(\mathfrak{b})} - (\partial u) J_{(\mathfrak{b})} +i\mathfrak{b}\partial^2 u -\frac12(\partial u)^2$$
as in \cite[Proposition 10.5]{MK}.
This proposition also says that the vertex fields $\mathcal{V}_{(\mathfrak{b},u)}=e^{*a \Phi_{(\mathfrak{b},u)} }$ with $2a(a+\mathfrak{b})=1$ produce degenerate level two singular vectors. 
In a similar way, the OPE exponentials $V_{(\mathfrak{b},u)}^{ia}$ of $\Phi_{(\mathfrak{b},u)}^+(\cdot,q)$ produce degenerate level two singular vectors
\begin{equation} \label{eq: level2}
T*V^{ia} = \frac1{2a^2} \partial^2 V_{(\mathfrak{b},u)}^{ia},
\end{equation}
where $T = T_{(\mathfrak{b},u)}.$ 
Alternatively, this can be shown by using the action of the Virasoro generators $L_n,$ see \cite[Section 7.4]{MK}, and the current generators $J_n,$ see \cite[Section 11.1]{MK}. 
The field $V_{(\mathfrak{b},u)}^{ia}$ is a current primary holomorphic field of charge $q=a.$ 
The OPE exponential $V_{(\mathfrak{b},u)}^{ia}$ can be defined as a differential of conformal dimension $\lambda = (6-\kappa)/(2\kappa)$ at $z$, and
$$V_{(\mathfrak{b},u)}^{ia}(z) = e^{\frac12\alpha a^2 z} e^{\odot ia \Phi_{(\mathfrak{b},u)}^+(z,q)}$$ 
in the $\mathbb{H}$-uniformization.

\subsection{Connection to the chordal SLE with drift}
Let $p\in\partial D, p\ne q.$ Denote 
$$\widehat{\mathsf{E}}[\mathcal{X}] :=\frac{\mathsf{E}[V_{(\mathfrak{b},u)}^{ia}\mathcal{X}] }{\mathsf{E}[V_{(\mathfrak{b},u)}^{ia}] } = \mathsf{E}[e^{\odot ia \Phi^+_{(0)}(p,q)}\mathcal{X}],$$
where $\mathcal{X}$ is a Fock space functional/field. 
The insertion of $ e^{\odot ia \Phi^+_{(0)}(p,q)}= V_{(\mathfrak{b},u)}^{ia}/\mathsf{E}V_{(\mathfrak{b},u)}^{ia}$   
is an operator $\mathcal{X}\mapsto \widehat{\mathcal{X}}$ defined on the Fock space functionals/fields. 
It is a linear operator commuting with the differential operators $\partial, \bar\partial$ and preserving Wick's product.
For example, with $u = \alpha a\,\im\, w,$ 
$$\widehat{\Phi}_{(\mathfrak{b},u)} =\Phi_{(\mathfrak{b},u)} +2a\arg w = \Phi_{(\mathfrak{b})} +\alpha a \,\im\, w +2a\arg w.$$

The connection between the conformal field theory and the chordal SLE (\cite[Proposition 14.3]{MK}) can be extended to the SLE with drift.
Let $g_t$ be the one-parameter family of L\"owner maps driven by $\xi_t=\sqrt\kappa B_t + \alpha t:$
$$\partial_t g_t(z) = \frac2{g_t(z) - \xi_t},$$
where $B_t$ is a standard Brownian motion starting from $0$, and $g_0:(D,p,q)\to(\mathbb{H},0,\infty)$ is a given conformal map.  
Denote by $\mathrm{SLE}_\alpha(\kappa)$ the L\"owner evolution driven by $\xi_t=\sqrt\kappa B_t +\alpha t.$
Let $$D_t:=\{z\in D: \tau_z>t\},$$
where $\tau_z$ is the first time when the solution $g_t(z)$ to the L\"owner equation does not exist. The $\mathrm{SLE}_\alpha(\kappa)$ curve $\gamma$ is defined by $$\gamma_t=\lim_{z\to0}g_t^{-1}(z+\xi_t).$$ 
Then for each $t,$ $w_t := g_t - \xi_t$ is a well-defined random conformal map from $(D_t,\gamma_t,q)$ onto $(\mathbb{H},0,\infty).$

Denote by $\mathcal{F}_{(\mathfrak{b},u)}$ the OPE family of $\Phi_{(\mathfrak{b},u)}$ with $u = \alpha a \operatorname{Im} w.$
We call a non-random field $M$ a martingale-observable for $\mathrm{SLE}_\alpha(\kappa$) if the process
$$M_t(z_1,\cdots,z_n) = M_{D_t,\gamma_t,q}(z_1,\cdots,z_n)$$
is a local martingale on SLE probability space. Of course, it is stopped when for any of $z_j$'s  there exits $D_t.$

\begin{theorem} If $X_j\in\mathcal{F}_{(\mathfrak{b},u)}\, (u = \alpha a\,\im\, w),$ then the non-random fields
$$M(z_1,\cdots,z_n) =  \widehat{\mathsf{E}}[X_1(z_1)\cdots X_n(z_n)]$$
are martingale-observables for the chordal $\mathrm{SLE}_\alpha(\kappa)$ with drift.
\end{theorem}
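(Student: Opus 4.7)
The plan is to reduce the theorem to a BPZ-type level-two differential equation for the insertion $V^{ia}_{(\mathfrak{b},u)}$, exactly as in the chordal case treated in \cite[Proposition 14.3]{MK}, with the drift $\alpha t$ absorbed by the exponential factor $e^{\frac{1}{2}\alpha a^2 z}$ that already appears in the definition of $V^{ia}_{(\mathfrak{b},u)}$.

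First, I would fix $z_1,\ldots, z_n\in D$ and work in the $\mathbb{H}$-uniformization via $w_t: (D_t,\gamma_t,q)\to(\mathbb{H},0,\infty)$. Using the conformal covariance of each $X_j\in \mathcal{F}_{(\mathfrak{b},u)}$ as a differential (or pre-Schwarzian form), one expresses the process $M_t(z_1,\ldots,z_n)$ as a product of conformal factors times the value of the correlator in $(\mathbb{H},0,\infty)$ at the moved points $w_t(z_j)$. Since $\xi_t=\sqrt\kappa B_t+\alpha t$, the flow $w_t=g_t-\xi_t$ satisfies $dw_t=\frac{2}{w_t}\,dt-\alpha\,dt-\sqrt{\kappa}\,dB_t$, which is a slit holomorphic stochastic flow \eqref{eq:shsf} with $b(z)=-\frac{2}{z}+\alpha$ and $\sigma(z)=-1$, that is, case \eqref{th:classification chordal} of Proposition~\ref{th:classification}. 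It\^o's formula in the form \eqref{eq:itolie} then gives
\[
dM_t=\Bigl(-\mathcal{L}_b+\tfrac{\kappa}{2}\mathcal{L}_\sigma^2\Bigr)M_t\,dt+\sqrt{\kappa}\,\mathcal{L}_\sigma M_t\,dB_t,
\]
so the theorem reduces to showing that the drift operator $-\mathcal{L}_b+\tfrac{\kappa}{2}\mathcal{L}_\sigma^2$ annihilates $M$.

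The second step is to translate this drift into Virasoro and current generators. With $\ell_n=-z^{n+1}$ and the identification $\mathcal{L}_{\ell_n}\leftrightarrow L_n$, we have $\mathcal{L}_\sigma=\mathcal{L}_{\ell_{-1}}=L_{-1}$, while $\mathcal{L}_b$ decomposes as $-2L_{-2}-\alpha L_{-1}$ acting at the SLE tip, so the drift operator becomes $2L_{-2}+\alpha L_{-1}+\tfrac{\kappa}{2}L_{-1}^2$. By Ward's identity (in the form used in \cite[Sections 5, 7]{MK}), the Virasoro generators $L_n$ can be transferred from the $X_j(z_j)$'s onto the vertex insertion $V^{ia}_{(\mathfrak{b},u)}$ located at $0$ (the image of the tip $\gamma_t$), up to harmless terms that cancel in the normalised expectation $\widehat{\mathsf{E}}$. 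Hence vanishing of the drift follows once we prove
\[
\Bigl(2L_{-2}+\alpha L_{-1}+\tfrac{\kappa}{2}L_{-1}^2\Bigr)V^{ia}_{(\mathfrak{b},u)}(z)=0
\]
as an identity of Fock space fields at $z=0$.

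The third step is the verification of this identity, which is where the special form of $V^{ia}_{(\mathfrak{b},u)}$ comes in. The level-two singular vector equation \eqref{eq: level2}, equivalently $L_{-2}V^{ia}_{(\mathfrak{b},u)}=\tfrac{1}{2a^2}L_{-1}^2V^{ia}_{(\mathfrak{b},u)}=-\tfrac{\kappa}{4}L_{-1}^2V^{ia}_{(\mathfrak{b},u)}$ (using $a^2=2/\kappa$ and the sign convention on $L_{-1}=\partial$), already kills the $2L_{-2}+\tfrac{\kappa}{2}L_{-1}^2$ piece. What remains is to show the $\alpha L_{-1}$ piece also vanishes, and this is precisely the role of the prefactor $e^{\frac{1}{2}\alpha a^2 z}$ in
$V^{ia}_{(\mathfrak{b},u)}(z)=e^{\frac{1}{2}\alpha a^2 z}\,e^{\odot ia\Phi^+_{(\mathfrak{b},u)}(z,q)}$: the modification $u=\alpha a\,\im w$ contributes a drift $-(\partial u)J_{(\mathfrak{b})}+\ldots$ to $T_{(\mathfrak{b},u)}$, and at the insertion point this shifts $L_{-1}V^{ia}_{(\mathfrak{b},u)}$ in such a way that the full operator $2L_{-2}+\alpha L_{-1}+\tfrac{\kappa}{2}L_{-1}^2$ annihilates the insertion. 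This is a routine but bookkeeping-heavy computation using the current algebra of \cite[Section 11]{MK}, where the $\alpha a^2/2$ produced by differentiating the exponential prefactor precisely cancels the $\alpha L_{-1}$ contribution.

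The main obstacle I anticipate is not the algebra but the careful application of Ward's identity in the presence of the background charge $\mathfrak{b}$ and the $u$-modification: one has to make sure that the transplantation of $L_n$'s onto $V^{ia}_{(\mathfrak{b},u)}$ produces exactly the combination that \eqref{eq: level2} handles, with the extra $u$-dependent terms in $T_{(\mathfrak{b},u)}$ (namely $-(\partial u)J_{(\mathfrak{b})}+i\mathfrak{b}\partial^2u-\tfrac12(\partial u)^2$) accounted for. Once that is done, passing from a local martingale to the martingale-observable statement is immediate, and the stopping at the first exit of any $z_j$ from $D_t$ is built into the definition.
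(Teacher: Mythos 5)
Your overall strategy is the same as the paper's: pass to the $\mathbb{H}$-uniformization, apply It\^o's formula so that the claim reduces to the vanishing of the drift $\left(-\mathcal{L}_b+\frac\kappa2\mathcal{L}_\sigma^2\right)\widehat{\mathsf{E}}[X]$, and deduce that vanishing from Ward's equation combined with the level-two degeneracy \eqref{eq: level2} of the insertion $V^{ia}_{(\mathfrak{b},u)}$, i.e.\ from a drift-modified BPZ--Cardy equation. The paper implements this by working with $R_\xi=\mathsf{E}[V(\xi)X]/\mathsf{E}[V(\xi)]$ and deriving $\frac1{2a^2}\partial_\xi^2\,\widehat{\mathsf{E}}_\xi[X]=-\frac12\alpha\,\partial_\xi\,\widehat{\mathsf{E}}_\xi[X]+\widehat{\mathsf{E}}_\xi[\mathcal{L}_{v_\xi}X]$, which cancels the It\^o drift term by term.

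The gap is in your third step. The identity $\bigl(2L_{-2}+\alpha L_{-1}+\frac\kappa2L_{-1}^2\bigr)V^{ia}_{(\mathfrak{b},u)}=0$ is false as a statement about Fock space fields. First, $\frac1{2a^2}=+\frac\kappa4$, not $-\frac\kappa4$, so \eqref{eq: level2} reads $L_{-2}V=\frac\kappa4\partial^2V$ and only the combination $-2L_{-2}+\frac\kappa2L_{-1}^2$ annihilates $V$; the sign has to be extracted from the identification of $-\mathcal{L}_b$ with Virasoro generators, which you have not pinned down. Second, and more seriously, no prefactor makes the residual $\alpha L_{-1}$ piece vanish on the field $V$ itself: $\partial V=\frac12\alpha a^2\,V+\cdots$ does not cancel $\alpha\,\partial V$. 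The cancellation occurs only at the level of the normalized correlation functions, and it is produced precisely by the two contributions you set aside as ``harmless'': the non-constant normalization $\mathsf{E}[V(\xi)]=e^{\frac12\alpha a^2\xi}$, whose $\xi$-derivatives enter when you divide by it, and the non-vanishing one-point function $\mathsf{E}[T(\xi)]=\frac{\alpha^2a^2}{8}$ appearing in Ward's equation \eqref{eq: Ward1}. Tracking these two terms is the actual content of the proof; once they are included, the $\alpha$-dependent pieces reassemble into $-\frac12\alpha\,\partial_\xi\,\widehat{\mathsf{E}}_\xi[X]$ in the BPZ--Cardy equation, and the remainder of your argument (the It\^o computation and the cancellation of the drift) goes through as you describe.
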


\begin{proof}
Let $\xi\in \mathbb{R}$, and let $V \equiv V_{(\mathfrak{b},u)}^{ia}.$ 
We have  
$$ \mathsf{E}[V(\xi)]=e^{\frac12\alpha a^2\xi}$$ in the $\mathbb{H}$-uniformization,
It follows from Ward's equation that 
\begin{equation} \label{eq: Ward1}
\mathsf{E}[T*V(\xi)X] =  \mathsf{E}[T(\xi)]\, \mathsf{E}[V(\xi)X] +  \mathsf{E}[\mathcal{L}_{v_\xi}V(\xi)X],
\end{equation}
for $X = X_1(z_1)\cdots X_n(z_n),\,X\in\mathcal{F}_{(\mathfrak{b},u)},$
where $v_\xi(z) = 1/(\xi-z).$
In the $\mathbb{H}$-uniformization, we have 
\begin{equation}\label{eq: ET}
\mathsf{E}[T(\xi)] = -\frac12(\partial u)^2 = \frac{\alpha^2a^2}8.
\end{equation}
Let 
$$R_\xi\equiv\widehat{ \mathsf{E}}_{\xi}[X] := \frac{ \mathsf{E}[V(\xi)X]}{ \mathsf{E}[V(\xi)]}=\mathsf{E}[e^{\odot ia \Phi^+_{(0)}(\xi,q)}\mathcal{X}] = e^{-\frac12\alpha a^2\xi}\,\mathsf{E}[V(\xi)X].$$ 
Then we obtain
$$\partial_\xi R_\xi = -\frac12\alpha a^2 R_\xi+ e^{-\frac12\alpha a^2\xi} \, \mathsf{E}[\partial_\xi V(\xi)X], $$
and 
\begin{align*}
\partial_\xi^2 R_\xi = \Big(\frac12\alpha a^2\Big)^2 R_\xi - \alpha a^2 e^{-\frac12\alpha a^2\xi}  \,\mathsf{E}[\partial_\xi V(\xi)X] + e^{-\frac12\alpha a^2\xi} \, \mathsf{E}[\partial_\xi^2 V(\xi)X].
\end{align*}
By the level two degeneracy equation \eqref{eq: level2} for $V,$ we arrive at
$$\frac1{2a^2}\partial_\xi^2 R_\xi =  \mathsf{E}[T(\xi)]\, R_\xi - \frac12 \alpha (\partial_\xi R_\xi+\frac12\alpha a^2 R_\xi)+ e^{-\frac12\alpha a^2\xi}\,  \mathsf{E}[T* V(\xi)X].$$
It follows from \eqref{eq: Ward1} and \eqref{eq: ET}, that 
$$\frac1{2a^2}\partial_\xi^2 \,\widehat{ \mathsf{E}}_{\xi}[X] = -\frac12 \alpha \partial_\xi\, \widehat{ \mathsf{E}}_{\xi}[X] +  \widehat{ \mathsf{E}}_{\xi}[\mathcal{L}_{v_\xi} X].$$
This is the BPZ-Cardy equation for conformal field theory of the chordal $\mathrm{SLE}_\alpha(\kappa).$ 

Now the process $M_t := M_{D_t,\gamma_t,q}$ is represented in terms of $R_\xi,$ $\xi_t,$ and $g_t:$  
$$M_t = m(\xi_t,t), \qquad m(\xi,t) =\big(R_\xi\,\|\,g_t^{-1}\big).$$
By It\^o's formula we have
$$dM_t=\partial_\xi\Big|_{\xi=\xi_t}~m(\xi,t)\,(\sqrt\kappa dB_t +\alpha\,dt)+\frac\kappa2\partial^2_\xi\Big|_{\xi=\xi_t}~m(\xi,t)\,dt+L_t\, dt,$$
where 
$$L_t :=\frac{d}{ds}\Big|_{s=0}(R_{\xi_t}\,\|\,g_{t+s}^{-1}).$$
As in the chordal SLE without drift, we have 
$L_t=-2\big(\mathcal{L}_{v_{\xi_t}}R_{\xi_t}\,\|\,g_{t}^{-1}\big).$
By the BPZ-Cardy equation, we have
$$L_t=  - \frac1{a^2}(\partial_{\xi}^2\big|_{\xi=\xi_t}R_\xi\,\|\,g_t^{-1})-\alpha(\partial_{\xi}\big|_{\xi=\xi_t}R_\xi\,\|\,g_t^{-1}),$$
and thus, $dM_t$ has a vanishing drift term.
\end{proof}

\begin{remark*}
A physical interpretation of the boundary condition changing operator will be given later,
see the last remark in Subsection~\ref{ss: dipolar}. 
\end{remark*}

\section{CFT of dipolar SLE with drift}
This part of the paper was inspired by analogous reasonings in \cite{Kang2012a}.
Let $D$ be a simply connected proper subdomain of $\mathbb{C}$, and let $q_-,q_+\in \partial D$ be two marked boundary points. We write $Q$ for the marked boundary arc from $q_+$ to $q_-$ in the positive direction.  

A dipolar $\mathrm{SLE}_\alpha(\kappa)$ driven by $\widetilde\xi_t = \sqrt\kappa B_t + \alpha t$ is defined by 
$$\partial_t \widetilde{g}_t(z) = \coth\big(\frac{\widetilde{g}_t(z)-\widetilde\xi_t}2\big), \qquad B_0=0,$$
where $\widetilde{g}_0$ is a given conformal map from $D$ onto the strip $\mathbb{S} := \{z\in\mathbb{C}: 0<\im z < \pi\}$ which maps $Q$ onto the upper boundary $\pi i + \mathbb{R}$ of $\mathbb{S}.$
Let 
$$\widetilde w_t = \widetilde g_t - \widetilde\xi_t,\quad \varphi(z) :=\tanh(z/2),\quad \xi_t=\varphi(\widetilde\xi_t), \quad g_t = \varphi\circ\widetilde g_t, \quad w_t = \varphi\circ\widetilde w_t.$$
Then $g_t$ satisfies
$$\partial_t g_t (z)=-\frac{1-g_t^2(z)}{2}\frac{1-\xi_t g_t(z)}{\xi_t-g_t(z)}.$$
Also, it follows from It\^o's formula that 
$$dw_t = -\frac\alpha2(1-w_t^2)\,dt + \frac{1-w_t^2}{2w_t}\,dt -\frac\kappa4w_t(1-w_t^2)\,dt-\frac{\sqrt\kappa}2(1-w_t^2)\,dB_t.$$
Note that $w_t$ is a normalized slit holomorphic stochastic flow from $D_t:=\{z\in D: \tau_z>t\}$ (as in the chordal case, $\tau_z$ is the first time when the solution $g_t(z)$ to the L\"owner equation does not exist) onto the upper half-plane $\mathbb{H}$ with the semicomplete vector field 
$$b(z) = \frac\alpha2(1-z^2) - \frac{1-z^2}{2z}$$ 
and the complete vector field 
$$\sigma(z) = -\frac{1-z^2}2.$$
The fields $b$ and $\sigma$ are equivalent to those in Proposition~\ref{th:classification} ~\eqref{th:classification dipolar} via $\phi:\mathbb{H}\to\mathbb{H},z\mapsto2z.$
Furthermore, the process $\xi_t$ satisfies
$$d\xi_t =  \frac\alpha2(1-\xi_t^2)\,dt -\frac\kappa4\xi_t(1-\xi_t^2)\,dt + \frac{\sqrt\kappa}2(1-\xi_t^2)\,dB_t.$$

\subsection{Review of dipolar CFT}
We consider a conformal transformation $w$ from $(D,q_-,q_+)$ onto $(\mathbb{H},-1,1).$
Recall  that we set $\mathfrak{b} = \sqrt{\kappa/8} - \sqrt{2/\kappa}.$ Let us define a background charge modifications $\Phi_{(\mathfrak{b})}$ by 
$$\Phi_{(\mathfrak{b})} = \Phi_{(0)} -2\mathfrak{b}\arg\big(\frac{w'}{1-w^2}\big).$$
Note that $\Phi_{(\mathfrak{b})}$ does not depend on the choice of $w.$
As in the chordal case, the current field $J_{(\mathfrak{b})}$ and the Virasoro field $T_{(\mathfrak{b})}$ change accordingly.

Before we introduce the boundary condition changing operator, let us recall the definition of OPE exponentials $\mathcal{O}^{(\tau,\tau_*;\tau_-,\tau_+)}:$ 
$$\mathcal{O}^{(\tau,\tau_*;\tau_-,\tau_+)}(z) = M^{(\tau,\tau_*;\tau_-,\tau_+)}(z) \, e^{\odot i(\tau\Phi^+_{(0)}(z)-\tau_*\Phi^-_{(0)}(z)+\tau_-\Phi^+_{(0)}(q_-)+\tau_+\Phi^+_{(0)}(q_+))}$$
under the neutrality condition $(\mathrm{NC}_0):$ 
$$\tau + \tau_*+\tau_-+\tau_+=0.$$ 
Here, the correlation part $M^{(\tau,\tau_*;\tau_-,\tau_+)} = \mathsf{E}\,[\mathcal{O}^{(\tau,\tau_*;\tau_-,\tau_+)}]$ is given by
\begin{align}\label{eq: M4O}
M^{(\tau,\tau_*;\tau_-,\tau_+)}&=(1-w)^{\nu^+}(1+w)^{\nu^-} (1-\bar{w})^{\nu_*^+}(1+\bar{w})^{\nu_*^-}(w-\bar{w})^{\tau \tau_*}
\\&\times
({w'_-})^{\lambda_-}({w'_+})^{\lambda_+}(w')^{\lambda}(\overline{w'})^{\lambda_*}, \qquad (w'_\pm=w'(q_\pm)). \nonumber
\end{align}
The dimensions $[\lambda,\lambda_*;\lambda_-,\lambda_+]$ and exponents $\nu^\pm,\nu_*^\pm$ are 
$$\lambda=\frac{\tau^2}{2}-\tau\mathfrak{b}, \quad \lambda_*=\frac{\tau_*^2}{2}-\tau_*\mathfrak{b} , \quad \lambda_\pm=\frac{\tau_\pm^2}{2},$$
and $\nu^\pm=\tau(\mathfrak{b}+\tau_\pm),\,\,
\nu_*^\pm=\tau_*(\mathfrak{b}+\tau_\pm).$ 

On the other hand, the Wick part of $\mathcal{O}^{(\tau,\tau_*;\tau_-,\tau_+)}(z)$ is  Wick's exponential of the formal fields
$$i(\tau\Phi^+_{(0)}(z)-\tau_*\Phi^-_{(0)}(z)+\tau_-\Phi^+_{(0)}(q_-)+\tau_+\Phi^+_{(0)}(q_+)),$$
where the formal fields $\Phi^\pm_{(0)}$ have the formal correlations
$$\mathsf{E}[\Phi^+_{(0)}(z)\Phi^+_{(0)}(z_0)] = \log\frac1{w(z)-w(z_0)},\quad \mathsf{E}[\Phi^+_{(0)}(z)\overline{\Phi^-_{(0)}(z_0)}] = \log(w(z)-\overline{w(z_0)}),$$
and satisfy the relations 
$$\Phi^-_{(0)}=\overline{\Phi^+_{(0)}} \textrm{ and } \Phi_{(0)} = \Phi^+_{(0)}+\Phi^-_{(0)} = 2\re\, \Phi^+_{(0)}.$$
Note that Wick's exponential is a well-defined Fock space field as long as the neutrality condition $(\mathrm{NC}_0)$ holds.
Furthermore, the OPE exponentials $\mathcal{O} \equiv \mathcal{O}^{(\tau,\tau_*;\tau_-,\tau_+)}$ satisfy Ward's OPE
$$T(\zeta)\mathcal{O}(z) \sim \lambda\frac{\mathcal{O}(z)}{(\zeta -z)^2}+\frac{\partial_z\mathcal{O}(z)}{\zeta-z}, \qquad(\zeta \to z),$$
and a similar OPE holds for $\bar{\mathcal{O}}$ with $\bar\lambda_*.$ 
See \cite[Proposition~4.2]{Kang2013}.

\subsection{Connection to the dipolar SLE with drift} \label{ss: dipolar}

Fix $a=\sqrt{2/\kappa}$ and $\mathfrak{b} =  \sqrt{\kappa/8} - \sqrt{2/\kappa}$ as usual. 
For a real parameter $\delta,$  we now introduce the boundary condition changing insertion field  
$$\Psi^\delta(\xi):=\mathcal{O}^{(a,0;  -(a-\delta)/2, - (a+\delta)/2)}(\xi).$$
The insertion of 
$$\frac{\Psi^\delta(\xi)}{\mathsf{E}[\Psi^\delta(\xi)]} = \exp\Big(\odot i \big(a\Phi^+_{(0)}(\xi)-\frac{a-\delta}2\Phi_{(0)}^+(q_-) - \frac{a+\delta}2\Phi_{(0)}^+(q_+)\big)\Big)$$
is an operator $\mathcal{X}\to\widehat{\mathcal{X}}$ on Fock space functionals/fields. 
For example,
$$\widehat\Phi_{(\mathfrak{b})} = \Phi_{(\mathfrak{b})} +2a\,\arg w - (a-\delta)\, \arg(1+w) - (a+\delta)\, \arg(1-w),$$
where $w$ is a conformal map from $(D,p,q_-,q_+)$ onto $(\mathbb{H},0,-1,1).$
The field $\Psi^\delta$ is a holomorphic current primary field of charge $a$ and it has conformal dimension $h = (6-\kappa)/(2\kappa).$
It satisfies the level two degeneracy equation
\begin{equation} \label{eq: level2d}
T_{(\mathfrak{b})}*\Psi^\delta = \frac\kappa4\, \partial^2\Psi^\delta.
\end{equation}
Let 
$$\widehat{ \mathsf{E}}[X] := \frac{ \mathsf{E}[\Psi^\delta(p)X]}{ \mathsf{E}[\Psi^\delta(p)]}.$$ 
\begin{theorem}
If $X_j\in\mathcal{F}_{(\mathfrak{b})}$ and if $\delta = \alpha a,$ then the non-random fields
$$M(z_1,\cdots,z_n) =  \widehat{\mathsf{E}}[X_1(z_1)\cdots X_n(z_n)]$$
are martingale-observables for the dipolar $\mathrm{SLE}_\alpha(\kappa)$ with drift.
\end{theorem}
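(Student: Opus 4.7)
The plan is to mirror the proof of the previous theorem (chordal case) but in the strip uniformization of dipolar SLE, where the driving process $\widetilde\xi_t = \sqrt{\kappa}B_t + \alpha t$ has the simplest possible semimartingale decomposition.

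First, I express the observable as $M_t = m(\widetilde\xi_t, t)$ with $m(\widetilde\xi, t) = (R_{\widetilde\xi} \,\|\, \widetilde g_t^{-1})$ and $R_{\widetilde\xi} := \widehat{\mathsf{E}}_{\widetilde\xi}[X] = \mathsf{E}[\Psi^\delta(\widetilde\xi) X]/\mathsf{E}[\Psi^\delta(\widetilde\xi)]$ pulled back to the strip. Applying It\^o's formula with $d\widetilde\xi_t = \sqrt\kappa\, dB_t + \alpha\, dt$ gives
\[
dM_t = \sqrt\kappa \, \partial_{\widetilde\xi} m \, dB_t + \left[\alpha\, \partial_{\widetilde\xi} m + \frac{\kappa}{2}\, \partial_{\widetilde\xi}^2 m + \widetilde L_t\right] dt,
\]
and, exactly as in the chordal case, the chain rule for pullbacks together with the strip L\"owner equation yields $\widetilde L_t = -(\mathcal{L}_{\widetilde v_{\widetilde\xi_t}} R_{\widetilde\xi_t} \,\|\, \widetilde g_t^{-1})$ with the L\"owner vector field $\widetilde v_{\widetilde\xi}(z) = \coth((z - \widetilde\xi)/2)$; the factor of $2$ that appeared in the chordal calculation is now absorbed into the residue of $\coth$.

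Next, I establish the analogue of the BPZ--Cardy equation for dipolar SLE with drift,
\[
\frac{\kappa}{2}\, \partial_{\widetilde\xi}^2 R_{\widetilde\xi} + \alpha\, \partial_{\widetilde\xi} R_{\widetilde\xi} = \mathcal{L}_{\widetilde v_{\widetilde\xi}} R_{\widetilde\xi}.
\]
Starting from Ward's equation
\[
\mathsf{E}[T_{(\mathfrak b)} * \Psi^\delta(\widetilde\xi) X] = \mathsf{E}[T_{(\mathfrak b)}(\widetilde\xi)]\, \mathsf{E}[\Psi^\delta(\widetilde\xi) X] + \mathsf{E}[\mathcal{L}_{\widetilde v_{\widetilde\xi}} \Psi^\delta(\widetilde\xi) X]
\]
and the level-two degeneracy \eqref{eq: level2d}, I expand $\partial_{\widetilde\xi}^2 R_{\widetilde\xi}$ via the quotient rule applied to the definition of $R_{\widetilde\xi}$, using the closed-form one-point function $\mathsf{E}[\Psi^\delta(\widetilde\xi)]$ read off from \eqref{eq: M4O}. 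The term in $\partial_{\widetilde\xi} \log \mathsf{E}[\Psi^\delta(\widetilde\xi)]$ that is linear in $\delta$ is exactly what produces the coefficient $\alpha$ of $\partial_{\widetilde\xi} R_{\widetilde\xi}$ once we set $\delta = \alpha a$. Substituting the resulting BPZ--Cardy equation into the drift of $dM_t$ forces its cancellation, so $M_t$ is a local martingale.

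The main obstacle is the bookkeeping: computing $\mathsf{E}[T_{(\mathfrak b)}(\widetilde\xi)]$ in the strip uniformization (tracking the Schwarzian correction from $\varphi(z) = \tanh(z/2)$ and the background-charge contributions at $q_\pm$), and then verifying that $\partial_{\widetilde\xi}\log \mathsf{E}[\Psi^\delta(\widetilde\xi)]$ together with $\mathsf{E}[T_{(\mathfrak b)}(\widetilde\xi)]$ produces precisely the coefficient $\alpha$ once $\delta = \alpha a$, rather than some other linear combination of $\alpha$ and $\delta$. These calculations both motivate and confirm the identification $\delta = \alpha a$; once they are in place, the remainder of the argument transcribes directly from the chordal proof.
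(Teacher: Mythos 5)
Your proposal is correct in substance and follows the same logical skeleton as the paper — level-two degeneracy \eqref{eq: level2d} plus Ward's equation yields a BPZ--Cardy equation, which then cancels the It\^o drift of $M_t = m(\xi_t,t)$ — but you organize the computation in the strip uniformization while the paper does everything in the half-plane. The paper transports the driving process to $\mathbb{H}$ via $\varphi(z)=\tanh(z/2)$, accepts the more complicated SDE $d\xi_t = \frac\alpha2(1-\xi_t^2)\,dt - \frac\kappa4\xi_t(1-\xi_t^2)\,dt + \frac{\sqrt\kappa}2(1-\xi_t^2)\,dB_t$, and in exchange gets to apply Ward's equation and the clean one-point function $\mathsf{E}[\Psi^\delta(\xi)]=(1-\xi)^{\nu^+}(1+\xi)^{\nu^-}$ in the standard chart with $q_\pm$ at the finite points $\mp1$; this produces \eqref{eq: Cardy1}. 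You instead keep the simple decomposition $d\widetilde\xi_t=\sqrt\kappa\,dB_t+\alpha\,dt$, which makes the It\^o step trivial, but you must then push the CFT identities into the strip chart: since $R_{\widetilde\xi}$ is a ratio its conformal factors cancel and it is a genuine scalar in the insertion point, so your target equation $\frac\kappa2\partial_{\widetilde\xi}^2R+\alpha\,\partial_{\widetilde\xi}R=\mathcal{L}_{\widetilde v_{\widetilde\xi}}R$ is exactly the pullback of \eqref{eq: Cardy1} under $\xi=\tanh(\widetilde\xi/2)$ (one checks $\partial_{\widetilde\xi}=\frac{1-\xi^2}{2}\partial_\xi$ turns $\frac{\delta}{2a}(1-\xi^2)\partial_\xi R$ into $\alpha\,\partial_{\widetilde\xi}R$ when $\delta=\alpha a$, and $\frac\kappa2\partial_{\widetilde\xi}^2$ into the remaining bracket). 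The price, which you correctly identify as the main burden, is the Schwarzian correction to $T_{(\mathfrak b)}$ in the strip chart and — a point you should make explicit — the fact that the marked points $q_\pm$ sit at $\pm\infty$ in the strip, so the factors $(w'_\pm)^{\lambda_\pm}$ in \eqref{eq: M4O} require a rooting/normalization there; the paper's choice of chart sidesteps both issues. One small caution: your $\widetilde v_{\widetilde\xi}(z)=\coth((z-\widetilde\xi)/2)$ is the L\"owner vector field itself, whereas the paper's $v_\xi$ is its negative under pushforward by $\varphi$; your signs in $\widetilde L_t=-(\mathcal{L}_{\widetilde v}R\,\|\,\widetilde g_t^{-1})$ and in the BPZ--Cardy equation are mutually compensating, so the cancellation goes through, but you should fix one convention and verify both signs against it rather than against the chordal template.
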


\begin{proof}
Combining the level two degeneracy equation~\eqref{eq: level2d} for $\Psi^\delta$ with Ward's equation \cite[Proposition~3.3]{Kang2013} we have 
\begin{align} \label{eq: Cardy0}
 \mathsf{E}[\Psi^\delta(\xi)\mathcal{L}_{v_\xi}X] &= \frac1{2a^2}\frac{(1-\xi^2)^2}2\, \mathsf{E}[\partial_\xi^2\Psi^\delta(\xi)X] -\frac{3\xi(1-\xi^2)}2\, \mathsf{E}[\partial_\xi\Psi^\delta(\xi)X]\\
&+\Big(\frac{3\xi^2-1}2h+\mathfrak{b}^2-h_+-h_-\Big)\, \mathsf{E}[\Psi^\delta(\xi)X] \nonumber
\end{align}
for the tensor product $X = X_(z_1)\cdots X_n(z_n)$ of fields $X_j$ in $\mathcal{F}_{(\mathfrak{b})}.$
Here, 
$$h  = \frac{6-\kappa}{2\kappa}, \quad h_\pm = \frac12\tau_\pm^2 = \frac18(a\pm\delta)^2$$
are dimensions of $\Psi^\delta$ at $\xi,q_\pm$, and $v_\xi$ is the usual vector field, 
$$v_\xi(z) = \frac{1-z^2}2\frac{1-\xi z}{\xi-z}.$$
Note that all fields  in \eqref{eq: Cardy0} are evaluated in the identity chart of upper half-plane.

Let 
$$R_\xi\equiv\widehat{ \mathsf{E}}_\xi[X] := \frac{ \mathsf{E}[\Psi^\delta(\xi)X]}{ \mathsf{E}[\Psi^\delta(\xi)]}.$$ 
Recall that (see \eqref{eq: M4O}) 
$$ \mathsf{E}[\Psi^\delta(\xi)] = M^{(a,0;-(a-\delta)/2,-(a+\delta)/2)}(\xi) = (1-\xi)^{\nu^+}(1+\xi)^{\nu^-}$$
in the upper half-plane,
where $\nu_\pm = -h \mp a\delta/2.$
Due to the relations 
$$\frac{\partial_\xi\,\mathsf{E}[\Psi^\delta(\xi)X]}{\mathsf{E}[\Psi^\delta(\xi)X]} =\frac{\partial_\xi R_\xi}{R_\xi} + \frac{\nu_+}{\xi-1} +  \frac{\nu_-}{\xi+1}$$
and 
$$\frac{\partial_\xi^2\,\mathsf{E}[\Psi^\delta(\xi)X]}{\mathsf{E}[\Psi^\delta(\xi)X]}
-\left(\frac{\partial_\xi\,\mathsf{E}[\Psi^\delta(\xi)X]}{\mathsf{E}[\Psi^\delta(\xi)X]}\right)^2
 =\frac{\partial_\xi^2 R_\xi}{R_\xi} - \left(\frac{\partial_\xi R_\xi}{R_\xi}\right)^2 - \frac{\nu_+}{(\xi-1)^2} -  \frac{\nu_-}{(\xi+1)^2},$$
we rewrite \eqref{eq: Cardy0} as 
\begin{equation}\label{eq: Cardy1}
\widehat{ \mathsf{E}}_\xi[\mathcal{L}_{v_\xi}X] = \frac\delta{2a}(1-\xi^2)\partial R_\xi + \frac1{2a^2}\Big(\frac{(1-\xi^2)^2}2\partial^2R_\xi - \xi(1-\xi^2)\partial R_\xi\Big).
\end{equation}
This is the BPZ-Cardy equation (cf. \cite[Proposition 5.1]{Kang2013}) for the conformal field theory of the dipolar SLE with drift.

Now the process $M_t (z_1,\cdots,z_n)\equiv M_{(D_t ,\gamma_t , Q)}(z_1,\cdots, z_n)$
is represented as 
$$M_t =m(\xi_t ,t) ,\,\, m(\xi, t )=(R_{\xi} \,\|\, g^{-1}_t ).$$
By It\^o's formula, $M_t$ is a semi-martingale with the drift term
$$\Big(\frac\alpha2(1-\xi^2)\partial_\xi +\frac{\kappa}{4}\big(\frac{(1-\xi^2)^2}{2}\partial_{\xi}^2 -\xi (1-\xi^2)\partial_{\xi}\big)\Big)\Big|_{\xi=\xi_t}m(\xi ,t)\,\,d t +\frac{d}{ds}\Big|_{s=0}(R_{\xi}\,\|\, g_{t+s}^{-1})\,\,d t.$$
Similarly to the chordal case, we have 
$$\frac{d}{ds} \Big|_{s=0}(R_{\xi}\,\|\, g_{t+s}^{-1})=-(\mathcal{L}_{v_{\xi_t}}R_{\xi_t}\,\|\,g_t^{-1}).$$
It follows from the BPZ-Cardy equation that 
$$\frac{d}{ds} \Big|_{s=0}(R_{\xi}\,\|\, g_{t+s}^{-1}) = -\Big(\frac\delta{2a}(1-\xi^2)\partial_\xi +\frac{\kappa}{4}\big(\frac{(1-\xi^2)^2}{2}\partial_{\xi}^2 -\xi (1-\xi^2)\partial_{\xi}\big)\Big)\Big|_{\xi=\xi_t}m(\xi ,t)\,\,d t .$$
Hence, the drift term of $M_t$ vanishes if we choose
$\delta = \alpha a.$
\end{proof}

\begin{remark*}
For a positive real $q$ and $\alpha\in\mathbb{R},$ let  $\delta = (\alpha a /2)\cdot q$ and  
$$ \Psi^\delta(0,-q,q) = \mathcal{O}^{(a, 0;-(a-\delta)/2, - (a+\delta)/2)}(0,-q,q)$$
in the upper half-plane. 
Denote
$$\widehat{\mathsf{E}}^q[\mathcal{X}]:=\frac{\mathsf{E}[\Psi^\delta(0,-q,q)\mathcal{X}]}{\mathsf{E}[\Psi^\delta(0,-q,q)]}.$$
For example, 
$$\widehat{\mathsf{E}}^q[\,\Phi_{(0)}(z)] = 2a\arg z -(a+\delta)\arg(q-z)  -(a-\delta)\arg(q+z)$$
in the upper half-plane. 
As $q\to\infty,$ we have 
$$\widehat{\mathsf{E}}^q[\,\Phi_{(0)}(z)]  \to 2a\arg z+\alpha a \im\,z = \widehat{\mathsf{E}}[\,\Phi_{(0,u)}(z)],$$
where $u(z) = \alpha a \im\,z.$
Thus one can view the conformal field theory of the chordal $\mathrm{SLE}_\alpha(\kappa)$ as the limiting theory of that of dipolar $\mathrm{SLE}_\alpha(\kappa)$ as two boundary marked points merge. 
\end{remark*}

\subsection{Cardy-Zhan's formula}
We now extend Cardy-Zhan's formula (see \cite{zhan}) in the dipolar SLE without drift to the case $\alpha\in (-1,1).$ 

Denote by $K_t:=\{z\in\mathbb{S}: \tau_z \le t\}$ the hull of dipolar SLE with drift.
Then $\bar{\mathbb{S}}\setminus\bar K_\infty$ has two component, $S_\pm\,(-\infty\in S_-,+\infty\in S_+).$
Now we suppose $\kappa >4$ and $ABC$ is a triangle with 
$$\angle A = (1-\frac4\kappa)\pi, \quad \angle B =  \frac2\kappa(1+\alpha)\pi,  \quad \angle C =  \frac2\kappa(1-\alpha)\pi.$$
Consider a Schwarz-Christoffel map $f$ from a strip $\mathbb{S}$ onto the triangle $ABC$ such that $$f(0) = A,  \quad f(+\infty) = B,  \quad  f(-\infty) = C.$$

\begin{proposition} \label{Cardy-Zhan}
Suppose $f(z) = aA + bB+ cC$ with $a+b+c = 1, a,b,c\in\mathbb{R}.$
Then 
$$\mathbb{P}[z\in K_\infty]=a,\qquad \mathbb{P}[z\in S_+] = b,\qquad \mathbb{P}[z\in S_-]=c.$$
\end{proposition}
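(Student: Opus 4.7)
The plan is to find a holomorphic function on $\mathbb{S}$ whose composition with the dipolar L\"owner flow is a bounded martingale, recognize it as the Schwarz--Christoffel map $f$ of the statement, and conclude via the optional stopping theorem together with the affine uniqueness of barycentric coordinates.

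First, I would fix $z\in\mathbb{S}$ and study the process $Z_t:=\widetilde{w}_t(z)=\widetilde g_t(z)-\widetilde\xi_t$, well-defined for $t<\tau_z$. Combining $\partial_t\widetilde g_t(z)=\coth(Z_t/2)$ with $d\widetilde\xi_t=\sqrt\kappa\,dB_t+\alpha\,dt$ yields
\begin{equation*}
dZ_t=\bigl(\coth(Z_t/2)-\alpha\bigr)\,dt-\sqrt\kappa\,dB_t.
\end{equation*}
For any holomorphic $F\colon\mathbb{S}\to\mathbb{C}$, the complex It\^o formula (using that the only Brownian component of $Z_t$ is the real one, of intensity $\sqrt\kappa$) gives
\begin{equation*}
dF(Z_t)=\Bigl(\tfrac\kappa2 F''(Z_t)+(\coth(Z_t/2)-\alpha)F'(Z_t)\Bigr)\,dt-\sqrt\kappa\,F'(Z_t)\,dB_t,
\end{equation*}
so $F(Z_t)$ is a local martingale precisely when $\tfrac\kappa2 F''+(\coth(z/2)-\alpha)F'=0$, integrating to
\begin{equation*}
F'(z)=C\,\bigl(\sinh(z/2)\bigr)^{-4/\kappa}\,e^{2\alpha z/\kappa}.
\end{equation*}

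Next, I would identify this $F$ with the Schwarz--Christoffel map $f$ of the statement. For $\kappa>4$ and $\alpha\in(-1,1)$, the expression $F'$ has an integrable power singularity $\sinh(z/2)^{-4/\kappa}$ at $0$ and exponential decay rates $2(1\mp\alpha)/\kappa>0$ at $\pm\infty$, so $F$ extends continuously to $\overline{\mathbb{S}}$ and maps $\mathbb{S}$ conformally onto a bounded triangle whose interior angles can be read off from the local exponents: $(1-4/\kappa)\pi$ at the image of $0$ and $2(1\pm\alpha)\pi/\kappa$ at the images of $\pm\infty$. This matches $\angle A,\angle B,\angle C$ of the proposition, and the two constants of integration can be chosen so that $F(0)=A$, $F(+\infty)=B$, $F(-\infty)=C$; thus $F=f$. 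Since the triangle is bounded, $f(Z_t)$ is a bounded local martingale, hence a genuine martingale, and the same holds for each of the real barycentric coordinates $a(Z_t),b(Z_t),c(Z_t)\in[0,1]$ of the decomposition $f=aA+bB+cC$.

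Finally, I would apply optional stopping and identify the terminal values of $Z_t$. On the event $\{z\in K_\infty\}$ one has $\tau_z<\infty$ and $Z_t\to 0$ by the very definition of $\tau_z$, so $f(Z_t)\to A$. On the events $\{z\in S_\pm\}$ one has $\tau_z=\infty$ and a conformal/extremal-length argument shows that $\re Z_t\to\pm\infty$, so $f(Z_t)\to B$ or $C$ respectively. Bounded convergence together with the martingale property then gives
\begin{equation*}
f(z)=\mathbb{P}[z\in K_\infty]\,A+\mathbb{P}[z\in S_+]\,B+\mathbb{P}[z\in S_-]\,C,
\end{equation*}
and the uniqueness of barycentric coordinates with respect to the non-degenerate triangle $ABC$ yields the three announced identities. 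The step I expect to be the main obstacle is the terminal identification on $\{z\in S_\pm\}$: proving rigorously that $\re Z_t\to\pm\infty$ there. It requires combining (i) the conformal geometry of $\widetilde g_t\colon D_t\to\mathbb{S}$, which sends $q_\pm$ to $\pm\infty$, (ii) the a.s.\ fact that for $\kappa>4$ the hull $K_\infty$ reaches the target arc $Q$ and partitions $\mathbb{S}$ into precisely the two components $S_\pm$, and (iii) the almost-sure asymptotic $\widetilde\xi_t/t\to\alpha$, which together control $\re\widetilde g_t(z)-\widetilde\xi_t$ along the two ends of the strip. A secondary technicality is the continuous extension of $f$ to the boundary segments of $\mathbb{S}$, which is standard Schwarz--Christoffel material once $\alpha\in(-1,1)$.
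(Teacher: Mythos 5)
Your argument is essentially the paper's proof: both compute the SDE for $Z_t=\widetilde g_t(z)-\widetilde\xi_t$, derive via It\^o the second-order ODE that makes $f(Z_t)$ a local martingale, identify its solution with the Schwarz--Christoffel map (the paper by passing to $\mathbb{H}$ through $h=f\circ\log$, you directly in the strip), and conclude by boundedness plus optional stopping, with the terminal identification on $K_\infty$ and $S_\pm$ left at the same level of detail in both write-ups. One small internal wrinkle worth noting: your (correct) decay rates $2(1\mp\alpha)/\kappa$ of $F'$ at $\pm\infty$ force interior angles $2(1\mp\alpha)\pi/\kappa$ at $f(\pm\infty)=B,C$, which is the opposite of the assignment you then state to match the proposition; this swap is already present in the paper itself (the exponent $-1+\frac{2}{\kappa}(1+\alpha)$ at $0=h^{-1}(C)$ in \eqref{eq: SC4h} gives $\angle C=\frac{2}{\kappa}(1+\alpha)\pi$, not $\frac{2}{\kappa}(1-\alpha)\pi$), so it is a labeling issue in the statement rather than a defect of your method.
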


\begin{proof}
Let $Z_t = \widetilde g_t - \widetilde\xi_t, \widetilde\xi_t =\sqrt\kappa B_t + \alpha t.$
Then by It\^o's formula, we have 
$$df(Z_t) = -\sqrt\kappa f'(Z_t)\, dB_t + \Big(\frac\kappa2 f''(Z_t) + \big(\coth\frac{Z_t}2-\alpha\big)f'(Z_t)\Big)dt.$$
Let $h(z) = f(\log z).$
The process $f(Z_t(z))$ is a local martingale if and only if the map $h$ satisfies 
\begin{equation}\label{eq: SC4h}
\frac{h''(z)}{h'(z)} = -\frac4\kappa\frac1{z-1} + \Big(-1+\frac2\kappa(1+\alpha)\Big)\frac1z.
\end{equation}
Due to our choice of Schwarz-Christoffel map $f,$ the function $h$ maps $\mathbb{H}$ conformally onto the triangle $ABC$ and $h(1) = A, \, h(\infty) = B$ and $h(0) = C.$
Thus $h$ satisfies \eqref{eq: SC4h}.
Note that $f$ is bounded and its continuous extension to $\partial \mathbb{S}$ is analytic on each component of $\partial \mathbb{S}\setminus\{0\}.$
Thus for $z\in \bar{\mathbb{S}}\setminus\{0\},$ the stopped process $f(Z_t(z)),\,(0\le t < \tau_z)$ is a martingale.
If $z\in K_\infty, S_+,S_-,$ respectively, then $f(Z_t(z))$ tends to $A, B, C,$ respectively, as $t\to\tau_z.$
Proposition now follows from the optional stopping time theorem. 
\end{proof}

In the last remark below, we identify the Cardy-Zhan observables with martingale-observables derived from conformal field theory of dipoalr SLE with drift. 
First, note that under the insertion of ${\Psi^\delta(p)}/{\mathsf{E}[\Psi^\delta(p)]},$ the OPE exponentials $\mathcal{O}^{(\tau,\tau_*;\tau_-,\tau_+)}$ with the neutrality condition $(\mathrm{NC}_0)$ have the correlation functions (1-point vertex-observables) $\widehat{M}^{(\tau,\tau_*;\tau_-,\tau_+)} = \widehat{\mathsf{E}}^\delta[\mathcal{O}^{(\tau,\tau_*;\tau_-,\tau_+)}]:$ 
\begin{align}\label{eq: M4Ohat}
\widehat M^{(\tau,\tau_*;\tau_-,\tau_+)}&=(1-w)^{\widehat\nu^+}(1+w)^{\widehat\nu^-} (1-\bar{w})^{\widehat\nu_*^+}(1+\bar{w})^{\widehat\nu_*^-}(w-\bar{w})^{\tau \tau_*} w^{\tau a}{\bar w}^{\tau_* a}
\\&\times
({w'_-})^{\widehat\lambda_-}({w'_+})^{\widehat\lambda_+}(w')^{\lambda}(\overline{w'})^{\lambda_*}, \qquad (w'_\pm=w'(q_\pm)), \nonumber
\end{align}
where the dimensions $[\lambda,\lambda_*;\widehat\lambda_-,\widehat\lambda_+]$ and exponents $\widehat\nu^\pm,\widehat\nu_*^\pm$ are 
$$\lambda=\frac{\tau^2}{2}-\tau\mathfrak{b}, \quad \lambda_*=\frac{\tau_*^2}{2}-\tau_*\mathfrak{b} , \quad \widehat\lambda_\pm=\frac{\tau_\pm^2-(a\pm\delta)\tau_\pm}{2},$$
and $\widehat\nu^\pm=\tau(\mathfrak{b}-(a\pm\delta)/2+\tau_\pm),\,\,
\widehat\nu_*^\pm=\tau_*(\mathfrak{b}-(a\pm\delta)/2+\tau_\pm).$ 
The changes from \eqref{eq: M4O} to \eqref{eq: M4Ohat}  can be derived from the rooting procedure (see \cite{MK,Kang2013}) or algebra of OPE exponentials (see \cite{Kang2012a}).

\begin{remark*}
The Cardy-Zhan observables in Proposition~\ref{Cardy-Zhan} are real-valued non-random fields with all conformal dimensions zero.
The constant field is the only vertex observable satisfying such dimension requirements. 
However, one can identify their derivative with a vertex-observable which has conformal dimensions
$$\lambda=1, \quad \lambda_{*}=\widehat\lambda_{+}=\widehat\lambda_{-}=0.$$
Note that a vertex observable $\widehat{M}^{(-2a,0;a-\delta,a+\delta)}$ has such dimensions.
In the $(\mathbb{H},0,\pm1)$-uniformization, we have 
$$\widehat{M}^{(-2a,0;a-\delta,a+\delta)} = (1-z)^{-1+\frac2\kappa(1-\alpha)}(1+z)^{-1+\frac2\kappa(1+\alpha)}z^{-\frac4\kappa}.$$
It follows that 
$$\frac{\partial  \widehat{M}^{(-2a,0;a-\delta,a+\delta)}}{ \widehat{M}^{(-2a,0;a-\delta,a+\delta)}} = -\frac4\kappa\frac1{z} + \Big(-1+\frac2\kappa(1-\alpha)\Big)\frac1{z-1}+\Big(-1+\frac2\kappa(1+\alpha)\Big)\frac1{z+1}.$$
Compare this in the $(\mathbb{H},0,-1,1)$-uniformization to \eqref{eq: SC4h} in the $(\mathbb{H},1,0,\infty)$-uniformization.
\end{remark*}

\end{document}